\def\bN{{\mathbb N}}
\def\bR{{\mathbb R}}
\def\bS{{\mathbb S}}
\def\bT{{\mathbb T}}
\def\bZ{{\mathbb Z}}
\def\cD{{\cal D}}
\def\cF{{\cal F}}
\def\cG{{\cal G}}
\def\cH{{\cal H}}
\def\cN{{\cal N}}
\def\Rt{{\bR^2}}
\def\Rm{{\bR^m}}
\def\Rq{{\bR^q}}
\def\vf{\mathfrak X}
\def\T{T}
\def\cs{\hbox{\rm cs\,}}
\def\cod{\hbox{\rm codim\;}}
\def\span{\hbox{\rm span}}
\def\rk{\hbox{\rm rank}}
\def\smpos{{C^\infty_{+}(\bR^2)}}
\newtheorem{theorem}{Theorem}[section]
\newtheorem*{thmN1}{Theorem N1}
\newtheorem*{thmN2}{Theorem N2}
\newtheorem*{thmIFT}{Theorem IFT}
\newtheorem*{lemW}{Lemma W}
\newtheorem{remark}{Remark}
\newtheorem*{definition}{Definition}
\newtheorem{example}{Example}
\newtheorem{lemma}[theorem]{Lemma}
\newtheorem{proposition}[theorem]{Proposition}
\def\det{\mathop{\hbox{det}}}
\def\dim{\mathop{\hbox{dim}}}
\def\dim {\mathop{\hbox{dim}}}
\newcommand{\real}{\mathbb{R}}
\def\Eq{g_{can}}
\begin{document}

\title{Partially Isometric Immersions and Free Maps}

\author{
Giuseppina D'Ambra\thanks{email: dambra@unica.it}, Roberto De Leo\thanks{email: deleo@unica.it},
Andrea Loi\thanks{email: loi@unica.it} \\ 
Dipartimento di Matematica, Universit\`a di Cagliari, \\
Via Ospedale 72, 09124 Cagliari -- Italy
}

\maketitle

\begin{abstract}\noindent
In this paper we investigate the existence of ``partially'' isometric immersions. These are  
maps $f:M\rightarrow \Rq$ which, for a given Riemannian manifold $M$, are isometries 
on some sub-bundle $\cH\subset TM$. 
The concept of free maps, which is essential in the Nash--Gromov theory of isometric immersions, 
is replaced here by that of $\cH$--free maps, i.e. maps whose restriction to $\cH$ is free.
We prove, under suitable conditions on the dimension $q$  of the Euclidean space, 
that $\cH$--free maps are generic and we provide, for the smallest possible value of $q$, 
explicit expressions for $\cH$--free maps in the following three settings: 
1--dimensional distributions in $\Rt$, Lagrangian distributions of completely integrable systems, 
Hamiltonian distributions of a particular kind of Poisson Bracket.

\end{abstract}

\noindent
{\it{Keywords}}: Isometric immersions, Nash's implicit function theorem, Free Maps, Distributions, 
Completely Integrable Systems, Poisson Manifolds

\noindent
{\it{Subj.Class}}: 58A30, 58J60, 53C12, 53C20, 37J35, 53D17 
\section{Introduction and motivation}
\label{sec:intro}
The present paper deals with the solvability of certain ``large'' systems of partial differential
equations (PDEs) that appear 
naturally when one considers the problem of inducing a quadratic form on a given distribution, where 
the term {\em large} refers to the fact that these systems contain much more unknowns than equations.
Our work is partly inspired by earlier works (\cite{D93,DL03,DL07}), all of them related, 
in different guises, to the general program of inducing geometric structures developed,
among many methods for solving the isometric immersion problem, by 
M. Gromov in his seminal book~\cite{Gro86}. 

To trace the origin of this theory, we must go back to the work of Nash who
showed~\cite{Nas56} that every compact $C^\infty$ Riemannian manifold can be isometrically 
immersed in $\Rq$ with $q\geq \frac{1}{2}n(3n+11)$. This fundamental result has been
substantially improved and refined by several authors \cite{Gr70A,Gro86,Gun90}. In particular,  the hard analytical 
part of Nash's original proof (based on a powerful technique generalizing the classical
implicit function theorem) was taken up by Gromov who stated a {\em Nash-type implicit function} theorem for a certain class
of differential operators called {\em infinitesimally invertible operators}  
(for the exact statement, see~\cite{Gro86},  p. 117). This is a result which, besides its importance in itself
(it serves as a basis to develop the whole of M. Gromov's method based on $h$-prinviple), represents 
the starting point to afford the study of many problems about the possibility of inducing arbitrary geometric structures.

\vskip 0.3cm
In this paper, $M$ will always denote a $C^{\infty}$ smooth manifold of dimension $m$ with metric 
$g=g_{\alpha\beta}dx^{\alpha}dx^{\beta}$, $\alpha,\beta=1,\dots,m$.
Let $f:M\rightarrow \Rq$ 
be a $C^\infty$ immersion $f=(f^1, \cdots , f^q)$
and let $\Eq =\delta_{ij}dx^idx^j$ be 
the canonical Euclidean metric on $\Rq$.
The map $f$ induces on $M$ a metric 
tensor given in local coordinates by:
\begin{equation}
f^*g_{can}=\delta_{ij}\frac{\partial f^{i}}{\partial x^\alpha}\frac{\partial f^{j}}{\partial x^\beta}dx^{\alpha}dx^{\beta}.
\end{equation}
Here and throughout the paper
we write the above pull-back $f^*g_{can}$ as $\cD(f)$, where 
$$\cD:C^\infty(M,\Rq)\to\Gamma$$
denotes the {\sl metric-inducing operator}, viewed as a map between the
space \linebreak $C^\infty(M,\Rq)$ of smooth maps $f:M\to\Rq$ and the space $\Gamma$ of smooth quadratic 
differential forms $g$ on $M$.
These function spaces  will always be considered  equipped 
with the $C^\infty$-fine  (also called Whitney) topology. Recall that, if the manifold $M$ is compact, 
the Whitney topology coincides with the ordinary $C^\infty$ topology.

Placed in this setting, the isometric immersion problem \cite{Nas56} asks for a 
solution $f$ to the (inducing) equation
\begin{equation}
\label{eq:systemisom}
\cD(f)=g 
\end{equation}
which, in local coordinates,  requires to solve,   for a given positive-definite symmetric matrix
of functions $g_{\alpha\beta}=g_{\alpha\beta}(x)$ on $M$, the system

\begin{equation}
\label{eq:isomEq}
\frac{\partial f^{i}}{\partial x^\alpha}\frac{\partial f^{j}}{\partial x^\beta}\delta_{ij}=
g_{\alpha\beta},\ i,j=1,\dots,q.
\end{equation}
which consists of $m(m+1)/2$ equations in the $q$ unknowns $f^i$.
One of the main steps in the original proof of Nash's isometric immersion theorem
amounts to inverting algebraically the linearized equations corresponding to the 
system~(\ref{eq:isomEq}).
It turns out that the same idea applies  
 to many other instances of nonlinear PDEs of geometric nature, which, following 
 the general approach indicated in~\cite{Gro86}
can be locally  solved after an appropriate 
infinite dimensional implicit function theorem is established to pass from (solutions of) the 
linearized system to (solutions of) the non-linear system.
Once this first step  is done,  the efficency of Gromov's  method allows to derive specific (local) statements
as direct corollaries of the generalized theorem just by specializing,  to the pertinent differential operator, 
the basic properties of infinitesimally invertible operators and the consequent analytical results
(compare \cite{Gr72}).

The formulation of Gromov's generalized implicit function theorem we need for our application is  stated below 
(see~\cite{Gr70A}):

\begin{thmIFT}
Let $\cD:C^{\infty}(M,\Rq)\to\Gamma$ be a $C^\infty$ differential operator 
 which is infinitesimally invertible over some open subset $U\subset C^{\infty}(M,\Rq)$.
 Then the restriction of $\cD$ to $U$ is an open map.
\end{thmIFT}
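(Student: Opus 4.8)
The plan is to deduce openness from a \emph{local solvability statement with estimates}: fixing $f_0\in U$ and writing $g_0=\cD(f_0)$, it suffices to show that every $g$ in a suitable $C^\infty$-neighborhood of $g_0$ can be written as $g=\cD(f)$ for some $f$ lying in a prescribed neighborhood of $f_0$ inside $U$. The starting point is the linearization $L_f(h)=\frac{d}{dt}\big|_{t=0}\cD(f+th)$, a linear differential operator in $h$ whose coefficients depend on $f$ and its derivatives. Infinitesimal invertibility over $U$ furnishes, for each $f\in U$, a linear differential operator $M_f$ with $L_f\circ M_f=\id$, the assignment $(f,\psi)\mapsto M_f(\psi)$ being itself a differential operator. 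This right inverse is precisely what is needed to solve the linearized equation $L_f(h)=\psi$ at each step of an iteration scheme.

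The natural idea is Newton's method: set $\Delta_n=g-\cD(f_n)$ and update by $f_{n+1}=f_n+M_{f_n}(\Delta_n)$, so that to first order the residual is annihilated. The obstruction is the notorious \emph{loss of derivatives}: both $M_f$ and the Taylor remainder $Q(f,h)=\cD(f+h)-\cD(f)-L_f(h)$ are differential operators of positive order, so each Newton step is less regular than the previous one and the iteration does not close up in any fixed Banach space. Following Nash, the remedy is to insert a family of smoothing operators $S_t$, $t\ge 1$, approximating the identity, and to run the \emph{regularized} scheme
\begin{equation*}
f_{n+1}=f_n+S_{t_n}\,M_{f_n}(\Delta_n),\qquad \Delta_n=g-\cD(f_n),
\end{equation*}
with a rapidly increasing sequence of smoothing parameters, the standard choice being $t_n=t_0^{\,\kappa^{\,n}}$ for some $\kappa\in(1,2)$.

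The core of the argument is a bootstrap in a scale of norms (Hölder norms $\|\cdot\|_{C^k}$, or Sobolev norms), resting on three families of estimates: the \emph{smoothing estimates} $\|S_t h\|_{C^{a}}\lesssim t^{a-b}\|h\|_{C^{b}}$ for $a\ge b$, together with $\|(\id-S_t)h\|_{C^{b}}\lesssim t^{b-a}\|h\|_{C^{a}}$ for $a\ge b$; the \emph{tame estimates} for $\cD$, $L_f$ and $M_f$, expressing their high-regularity norms linearly in the high norm of the argument with coefficients controlled by a fixed low norm of $f$; and the \emph{quadratic estimate} for the remainder, $\|Q(f,h)\|\lesssim \|h\|_{C^{r}}\,\|h\|_{\text{high}}$, which makes $Q$ genuinely second order in $h$. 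One then proves by induction that the low-order norm of the error $\Delta_n$ decays super-exponentially while the high-order norm of $f_n$ grows only at a controlled rate polynomial in $t_n$; the super-exponential decay is precisely the surplus produced by the quadratic convergence of Newton's method, and it is what lets that decay survive the loss of derivatives introduced by $S_{t_n}$ and $M_{f_n}$.

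The main obstacle is calibrating this induction: one must choose the exponents in the norm scale and the growth rate $\kappa$ of the smoothing parameters so that, at each step, the quadratic gain of order $\|\Delta_n\|^2$ dominates the losses $t_n^{\,c}$ coming from $S_{t_n}$, $M_{f_n}$ and $Q$. Once the inductive inequalities are closed, the sequence $f_n$ converges in a low norm to a limit $f_\infty$, the residuals $\Delta_n\to 0$ force $\cD(f_\infty)=g$, and the uniform control of the increments keeps $f_\infty$ in the chosen neighborhood of $f_0$ and inside $U$ (which is open). Since $f_0$ and the neighborhood were arbitrary, $\cD|_U$ carries open sets to open sets, which is the assertion. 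The technical heart — and the only place where genuine difficulty lies — is thus the convergence of the Nash–Moser iteration, i.e. the verification that the quadratic gain beats the derivative loss uniformly along the scheme.
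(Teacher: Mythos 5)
The paper does not prove Theorem~IFT: it is quoted from Gromov--Rokhlin \cite{Gr70A} (see also \cite{Gro86}, p.~117), and the authors explicitly decline even to state the formal definition of infinitesimal invertibility. So there is no in-paper argument to compare yours against; the relevant benchmark is Gromov's (ultimately Nash's) proof, and your outline does follow that route: a Newton iteration driven by the algebraic right inverse $M_f$ of the linearization, regularized by smoothing operators $S_{t_n}$ with rapidly growing parameters, and closed by a bootstrap in a scale of norms.

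That said, what you have written is a roadmap rather than a proof, and you say so yourself: the ``technical heart'' --- the verification that the quadratic gain beats the derivative loss uniformly along the scheme --- is exactly what is absent. The exponents in the norm scale, the order (``defect'') of $M_f$, the growth rate $\kappa$, and the inductive inequalities are never pinned down, so the convergence of the $f_n$, the membership of the limit $f_\infty$ in $U$, and the uniformity of the neighbourhood of $g_0$ on which the equation is solvable are all asserted rather than established. Two further points would need attention in a complete write-up. First, the paper works with the $C^\infty$-fine (Whitney) topology on a possibly noncompact $M$, whereas the smoothing and tame estimates you invoke live naturally on compact manifolds or in Banach/Fr\'echet scales; one must either localize (exhaust $M$ by compacta and patch the local solutions) or restrict to the compact case, and this is not a cosmetic issue for openness in the fine topology. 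Second, infinitesimal invertibility in Gromov's sense means that $(f,\psi)\mapsto M_f(\psi)$ is a differential operator of finite order defined for $f$ in an open set determined by a finite jet condition; the tame estimates for $M_{f_n}$ along the iteration therefore require that the $f_n$ remain in a fixed ``good'' subset of $U$ with uniformly controlled low-order norms, which is itself part of the induction you have not closed. As it stands the proposal correctly identifies the strategy but defers all of its substance.
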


The formal definition of infinitesimally invertible operator 
is omitted here as it requires elaborate preliminaries (see~\cite{Gro86}, p.115-116). For our purposes it is enough to say 
that an operator is infinitesimally invertible if its linearization is invertible.

In the  case of the metric inducing operator, the class of those maps to which 
Theorem IFT is applicable consists of the maps called {\em free} by Nash.
Recall that an immersion $f: M\rightarrow \Rq$ is said to be {\sl free} if for all $x\in M$ 
the $m+s_m$ vectors $\{\partial_{\alpha}f^i(x), \partial_{\alpha\beta}f^i(x)\}$ 
are linearly independent (clearly a necessary condition for the existence of free maps 
$f:M\to\Rq$ is that $q\geq m+s_m$).

\begin{thmN1} \rm{(\cite{Nas56} and \cite{Gro86}, p.116)}
{\em The metric inducing operator $\cD:C^{\infty}(M,\Rq)\to\Gamma$ is infinitesimally 
 invertible over the set of free maps $M\rightarrow \Rq$}. 
\end{thmN1}

 The next result, which follows by a straightforward application of Thom's transversality theorem
 is also due to Nash (\cite{Nas56})
(compare  \cite{Gro86}, p.33).

\begin{thmN2}
A generic map  $M\rightarrow \Rq$ is free for
 $q\geq 2m+s_m$, where $s_m=\frac{m(m+1)}{2}$.
\end{thmN2}
 (The above  means that free maps are open and dense among all $C^2$-maps for $q\geq 2m+s_m$).

Let us turn now to the subject matter of the present paper, where we are given a $k$-dimensional 
distribution $\cH$ on a $m$-dimensional smooth manifold $M$.
Denote by $\Gamma_\cH$  the space of quadratic differential forms on $\cH$ and 
by $G_\cH\subset \Gamma_\cH$ the (open) sub-set of the positive ones, i.e. the set of smooth 
Riemannian metrics on $\cH$. 

We are concerned with the following question.
Assume that there exists a smooth map $f_0:M\rightarrow \Rq$ which induces on $M$ a metric  
$g_0\in G_\cH$ (from the canonical Euclidean metric $g_{can}$ on $\Rq$). This means that: \begin{equation}
\label{eq:systemisomH}
f_0^*\Eq|_\cH=g_0\,,
\end{equation}
which can be written as
$$\cD_\cH(f_0)= g_0 ,$$
where  $\cD_\cH:C^\infty(M,\Rq)\to \Gamma_\cH$ denotes  the partial differential operator defined by:
$$f\mapsto \cD_\cH(f)= f^*\Eq|_\cH .$$

We shall be interested in seeing   whether the same is true for any other $g\in G_\cH$ close enough to $g_0$,
namely if  the inducing equation
\begin{equation}
\label{eq:D_H(f)=g}
\cD_\cH(f)=g\;
\end{equation}
is solvable  in a neighbourhood of $g_0$.
In different words, we want to find  an open neighbourhood 
$U_{g_0}\subset G_\cH$ of $g_0$ which is contained in the image of $\cD_\cH$.

The above equation~(\ref{eq:D_H(f)=g}) is easily seen to be equivalent
to a system of PDEs which is, for any metric $\tilde g$ on $M$, 
a subsystem of~(\ref{eq:isomEq}). This is the reason for calling 
{\sl partial isometry} between $(\cH, g)$ and $(T\Rq, \Eq )$ 
any map $f:M\to \Rq$ which is a solution of~(\ref{eq:D_H(f)=g}).
It can be useful to notice that, since the map $f$ is not necessarily 
an immersion, unlike in the case of isometries the quadratic form $\cD(f)$
is not required to be positive-definite on the whole $TM$.

To answer the above question 
we have to study the linearization of the operator $\cD_\cH$. 
This is done in Section~\ref{sec:free}, where we introduce the 
notion of $\cH$-free map $f:M\rightarrow\Rq$ and prove, 
for the operator $\cD_\cH$, 
 the analogous of 
Theorems~N1 and~N2.
This is Theorem~\ref{thm:genericHfree}.
As a  corollary (see Theorem~\ref{thm:NGHfree}) we obtain  that the restriction 
of $\cD_\cH$ to the set of $\cH$-free maps is an open map.

As we shall see from the very definition of $\cH$-freedom,  the condition 
$q\geq k+s_k$ is necessary (but not sufficient) for the existence of $\cH$-free maps 
$f:M\rightarrow\Rq$.
It is then  natural to look for explicit expressions of $\cH$-free maps 
when  $q=k+s_k$. Notice that very few 
natural examples of free maps are known so far (see~\cite{Gro86}, p.12).
In Section~\ref{sec:hfree} we show explicit examples of $\cH$-free maps 
when the distribution $\cH$ is one of the following: 
a one-dimensional distribution on $\Rt$;  the Lagrangian distribution 
of a complete integrable system; the Hamiltonian distribution of a certain
kind of Poisson bracket.\\

We end this introduction by stating the main results contained in this paper. 
Their proofs are postponed  to  Section~\ref{sec:hfree}.

\begin{theorem}
\label{thm:bobtrick}
Let $\cH\subset \T\Rt$ be a one-dimensional distribution of finite type
on $\Rt$. Then there exists a smooth  function $f:\Rt\rightarrow\bR$ such that the map
$F:\Rt\rightarrow\Rt$ given by $F=\psi\circ f$ is an $\cH$-free map
for any  free map $\psi:\bR\rightarrow\Rt$.
\end{theorem}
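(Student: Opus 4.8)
Since $\Rt$ is simply connected, the first thing I would note is that the line field $\cH$ is orientable and hence spanned by a globally defined, nowhere vanishing smooth vector field $X$. For a one–dimensional distribution spanned by $X$ we have $k=1$ and $s_k=1$, so a map $G:\Rt\to\Rt$ should be $\cH$–free precisely when, at every point, the two vectors $XG$ and $X^2G$ (the first and second derivatives of $G$ along $X$, regarded as elements of $\bR^2$) are linearly independent. I would first verify that this condition is unchanged under a positive rescaling $X\mapsto\lambda X$ used to re-span $\cH$, so that it is genuinely intrinsic to $\cH$ and not an artifact of the chosen frame.

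The plan is then to compute $XF$ and $X^2F$ for $F=\psi\circ f$ and simply read off when they are independent. The chain rule gives
\[
XF=(Xf)\,\psi'(f),\qquad X^2F=(Xf)^2\,\psi''(f)+(X^2f)\,\psi'(f).
\]
Because $\psi$ is free, the vectors $\psi'(f)$ and $\psi''(f)$ form a basis of $\bR^2$ at every point, and in this frame the $2\times2$ matrix whose rows are the coordinates of $XF$ and $X^2F$ is
\[
\begin{pmatrix} Xf & 0\\ X^2f & (Xf)^2\end{pmatrix},
\]
whose determinant equals $(Xf)^3$. Hence $XF$ and $X^2F$ are linearly independent at a point if and only if $Xf\neq0$ there. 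This is the heart of the matter: the freeness of $\psi$ enters only through the basis property of $\{\psi'(f),\psi''(f)\}$, which explains why \emph{any} free $\psi$ works, and the entire problem collapses to the purely first–order condition $Xf\neq0$ on the scalar function $f$.

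It therefore remains to produce a smooth $f:\Rt\to\bR$ with $Xf$ nowhere zero, i.e. a function strictly monotone along every integral curve of $X$; this is where the finite–type hypothesis on $\cH$ is essential, and I expect it to be the main obstacle. The condition $Xf\neq0$ rules out $X$ having any closed, or more generally recurrent, integral curve, since $f$ would be forced to return to an earlier value along such a curve; conversely, once these dynamical pathologies are excluded the flow of $X$ lets one build $f$. Concretely, I would cover $\Rt$ by a locally finite family of flow boxes in each of which a time coordinate $t_i$ satisfies $Xt_i=1$, and then assemble a global monotone $f$ from the $t_i$, invoking finite type to ensure the local time functions can be organized coherently — equivalently, that $X$ admits a global cross–section, or a strict Lyapunov function — so that the assembled $f$ still satisfies $Xf>0$ everywhere. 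The delicate point, and where I would spend most of the effort, is precisely this coherent global patching, since a naive partition–of–unity combination of the $t_i$ need not preserve the sign of $Xf$. With such an $f$ in hand, the computation above shows that $F=\psi\circ f$ is $\cH$–free for every free $\psi$, completing the argument.
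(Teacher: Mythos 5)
Your reduction is correct and coincides with the paper's: the chain rule gives $\det D_{\xi,F}=[a'(f)b''(f)-a''(f)b'(f)]\,(L_\xi f)^3$ for $\psi=(a,b)$, the bracketed factor is nowhere zero because $\psi$ is free, and the frame-independence of the rank condition is Proposition~\ref{thm:invrank}. But this is only the easy half of the theorem. The genuine content --- the existence of a smooth $f:\Rt\to\bR$ with $L_\xi f>0$ everywhere for a finite-type distribution --- is exactly the step you leave as a sketch and explicitly flag as ``the delicate point''. In the paper this is Lemma~\ref{thm:W1}, a generalization of Weiner's lemma, and its proof is the bulk of the work: for each point $p$ one builds a bump-like function $f_p$ supported in the saturated tubular neighbourhood $U_p=\pi^{-1}(g_p)$ of the leaf through $p$, satisfying $L_\xi f_p\geq 0$ everywhere and $L_\xi f_p>0$ on an open slab; one extends $f_p$ to all of $\Rt$ by locally constant values using the fact that each leaf separates the plane; and one sums a weighted series $\sum 2^{-N(k,i)}f_{k,i}$ over a covering whose local finiteness is precisely where the finite-type hypothesis enters. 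The patching difficulty you worry about is resolved not by combining local time functions with a partition of unity (which, as you note, would destroy the sign of $Xf$) but by adding functions that are all monotone non-decreasing along $\xi$, so positivity of the derivative survives the sum. Without this construction your argument proves only the conditional statement ``if such an $f$ exists then $F=\psi\circ f$ is $\cH$-free''.

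Two of your auxiliary claims are also wrong and would derail the strategy you propose for filling the gap. First, the existence of $f$ with $L_\xi f>0$ is \emph{not} equivalent to $X$ admitting a global cross-section: the paper's example $g(x,y)=(y^2-1)e^x$ (cf.\ Figure~\ref{fig:p1}) is a finite-type foliation with no global transversal, yet the theorem applies to it; transversality of every level set of $f$ to $\cH$ does not require a single level set to meet every leaf. Second, closed and recurrent integral curves are automatically impossible for a nonvanishing vector field on $\Rt$ (a periodic orbit would enclose a singularity by index considerations, and Kaplan's theorem excludes nontrivial recurrence), so these are not what the finite-type hypothesis is ruling out; its role is to control the set of inseparable leaves (separatrices), i.e.\ the non-Hausdorffness of the leaf space, which is what makes the covering in the summation locally finite.
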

\begin{theorem}
\label{thm:CIS}
Let $(M^{2n},\Omega)$ be a symplectic manifold admitting a completely integrable system 
$\{I_1,\cdots,I_n\}$, $\cH\subset TM$ the $n$-dimensional Lagrangian distribution 
$\cH=\cap_{i=1}^n\ker dI_i$ and $\cF$ the corresponding Lagrangian foliation. 
Assume that the Hamiltonian vector fields associated to the $I_i$ are all complete 
and that every leaf of $\cF$ has no compact component.
Then it is possible to find $n$ smooth real valued functions $f^i$, $i=1, \dots , n$, 
on $M$ such that the map $F:M\to\bR^{n+s_n}$, $s_n=\frac{n(n+1)}{2}$ defined by
$$F(x)=(\psi_1(f^1(x)),\cdots,\psi_n(f^n(x)),f^1(x)f^2(x),\cdots,f^{n-1}(x)f^n(x))$$
is $\cH$-free for any choice of $n$ free maps  $\psi_i: \bR\rightarrow\bR^2$.
\end{theorem}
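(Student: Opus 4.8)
The plan is to turn the commuting complete Hamiltonian flows into global coordinates along the leaves, so that $\cH$-freeness of $F$ becomes a pointwise linear-algebra statement which can be resolved block-by-block using the freeness of the $\psi_i$. First I would record the structural input. Complete integrability means the $I_i$ are in involution, so $[X_{I_i},X_{I_j}]=X_{\{I_i,I_j\}}=0$; since each $X_{I_i}$ lies in $\cap_j\ker dI_j=\cH$ and the $dI_j$ are independent, the $X_{I_i}$ form a global frame of the Lagrangian distribution $\cH$. Completeness then lets the flows integrate to a smooth action $\Phi:\bR^n\times M\to M$ whose orbits are exactly the leaves of $\cF$. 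The hypothesis that no leaf has a compact component rules out torus factors in the orbits --- a nontrivial discrete isotropy subgroup would split off a compact torus --- so the action is free and every leaf is diffeomorphic to $\bR^n$, with the flow parameters $(t_1,\dots,t_n)$ furnishing global coordinates along each leaf and $\partial_{t_i}=X_{I_i}$.

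Next I would build the functions $f^i$. The free $\bR^n$-action with contractible orbits $\cong\bR^n$ realizes $M$ as a trivial bundle $M\cong(M/\bR^n)\times\bR^n$, and the $\bR^n$-coordinate functions give smooth $f^i:M\to\bR$ with $X_{I_j}f^i=\delta_{ij}$; in particular, along each leaf $f^i$ depends only on $t_i$ with $\partial_{t_i}f^i\neq0$, so writing $u_i=f^i$ one has $u_i'\neq0$ (indeed $u_i'=1$). I expect this global step to be the main obstacle. Leafwise the coordinates $t_i$ are immediate, but assembling them into globally defined smooth functions on $M$ requires the orbit space to be well-behaved (the action free and proper) and the resulting bundle to be trivial; this is precisely where completeness (to define $\Phi$) and the non-compactness of the leaves (contractible fibers, which kill the monodromy obstructing global action-angle coordinates in the compact-fiber case) are both essential.

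Finally I would check $\cH$-freeness directly in the frame $\partial_{t_1},\dots,\partial_{t_n}$ of $\cH$. Using $f^i=f^i(t_i)$, one computes $\partial_{t_k}\psi_i(f^i)=\delta_{ik}\,u_k'\,\psi_k'(u_k)$ and $\partial_{t_k}^2\psi_i(f^i)=\delta_{ik}[(u_k')^2\psi_k''(u_k)+u_k''\psi_k'(u_k)]$, while for the quadratic entries $\partial_{t_k}(f^if^j)$, $\partial_{t_k}^2(f^if^j)$ and $\partial_{t_k}\partial_{t_l}(f^if^j)$ vanish unless the differentiating indices lie in $\{i,j\}$; crucially $\partial_{t_k}\partial_{t_l}(f^if^j)$ with $k<l$ is nonzero only for $\{k,l\}=\{i,j\}$, where it equals $u_k'u_l'$. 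Now suppose $\sum_k a_k\,\partial_{t_k}F+\sum_{k\le l}b_{kl}\,\partial_{t_k}\partial_{t_l}F=0$ at a point. The two coordinates coming from $\psi_k$ receive contributions only from $a_k$ and $b_{kk}$, giving $a_k u_k'\,\psi_k'(u_k)+b_{kk}[(u_k')^2\psi_k''(u_k)+u_k''\psi_k'(u_k)]=0$; since $\psi_k$ is free, $\psi_k'(u_k)$ and $\psi_k''(u_k)$ are independent in $\Rt$, which with $u_k'\neq0$ forces $a_k=b_{kk}=0$ for every $k$. The relation then collapses to $\sum_{k<l}b_{kl}\,\partial_{t_k}\partial_{t_l}F=0$, and reading off the $f^kf^l$-coordinate gives $b_{kl}u_k'u_l'=0$, hence $b_{kl}=0$. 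Thus the $n+s_n$ derivatives are linearly independent at every point, i.e. $F$ is $\cH$-free, for any free choice of the $\psi_i$.
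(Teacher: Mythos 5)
Your second step---the block/linear-independence computation showing that $a_k=b_{kk}=0$ from the freeness of $\psi_k$ and then $b_{kl}=0$ from the entry $L_{\xi_{I_k}}f^k\,L_{\xi_{I_l}}f^l\neq 0$---is essentially the paper's own verification (there phrased as a block-upper-triangular determinant $2^{s_n}\prod_k g_k^{n+2}D\psi_k$ with $g_k=L_{\xi_{I_k}}f^k$), and it only needs $L_{\xi_{I_j}}f^i=0$ for $j\neq i$ together with $L_{\xi_{I_i}}f^i>0$; the normalization $=\delta_{ij}$ is not required. The problem is entirely in your first step, and it is a genuine gap, not a technicality. From ``the $\bR^n$-action is free, complete, with orbits diffeomorphic to $\bR^n$'' you conclude that $M\cong(M/\bR^n)\times\bR^n$ and read off the $f^i$ as the fiber coordinates. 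That implication is false: it would additionally require the action to be proper (equivalently, the leaf space to be a Hausdorff manifold and the foliation a locally trivial fibration), and nothing in the hypotheses guarantees this. The Arnold--Liouville theorem only gives each leaf a \emph{saturated product neighbourhood}; globally the leaf space may be non-Hausdorff because of inseparable leaves. Already for $n=1$, $M=\Rt$, a foliation by lines with a pair of separatrices (e.g.\ the level sets of $(y^2-1)e^x$, as in the paper's own example) satisfies all the hypotheses after normalizing the generator to be complete, yet admits \emph{no} global function $f$ with $L_\xi f\equiv 1$: on the two separatrices $y=\pm 1$ such an $f$ would be $\pm x+\mathrm{const}$, while the transit time along a nearby U-shaped leaf between the points $(R,-1+\epsilon)$ and $(R,1-\epsilon)$ tends to infinity as $\epsilon\to 0$, contradicting continuity of $f$. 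So the ``global coordinates along the leaves'' you posit simply do not exist in general.

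You correctly flagged this global step as the main obstacle, but your resolution of it assumes away exactly the difficulty that the theorem is about. The paper's route is Lemma~\ref{thm:W2} (a Weiner-type argument): cover $M$ by saturated Arnold--Liouville charts, build in each a compactly-supported ``partial angle function'' $f^j_l=b(J^l_1)\cdots b(J^l_n)\,l(\varphi^j_l)$, and sum a countable, suitably weighted series $\sum_k a_kf^j_{l_k}$ so that the result is smooth and satisfies the weaker but sufficient conditions $\{I_i,f^i\}>0$ and $\{I_i,f^j\}=0$ for $j\neq i$ everywhere (the no-compact-component hypothesis entering only to ensure single-valuedness). To repair your proof you should replace your trivial-bundle claim by such a construction (or by an explicit properness hypothesis, which would change the theorem); the rest of your argument then goes through verbatim with $u_k'$ replaced by $g_k=L_{\xi_{I_k}}f^k>0$.
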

\begin{theorem}
\label{thm:MagnBra}
Let $M$ be an $n$-dimensional oriented Riemannian manifold, 
$H=\{h_1,\cdots,h_{n-2}\}$ 
a set of $n-2$ functions functionally independent at every point 
and $\{\cdot,\cdot\}_H$
the corresponding Riemann-Poisson bracket.
If $h\in C^\infty(M)$ is functionally independent from all the 
$h_i$ and $\cH$ is the corresponding Hamiltonian 1-dimensional distribution, 
then there exists a (possibly multivalued) smooth function $f:M\rightarrow\bR$ 
such the smooth map $F:M\to\Rt$ given by $F(x) = \psi(f(x))$
is $\cH$-free for every free map $\psi:A\to\Rt$ where $A=\bR$ if $f$ is single-valued or 
$A=\bS^1$ if $f$ is multivalued. 
\end{theorem}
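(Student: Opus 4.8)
The plan is to reduce $\cH$-freedom of the composite $F=\psi\circ f$ to a single pointwise, first-order condition on $f$, and then to produce an $f$ satisfying it. Write $\cH=\span(X_h)$ for the Hamiltonian vector field $X_h$ of $h$. Since $\cH$ is one-dimensional we have $k=1$ and $s_1=1$, so $\cH$-freedom of $F:M\to\Rt$ means exactly that at every point the two vectors $X_hF$ and $X_h^2F$ are linearly independent in $\bR^2$. I would compute, by the chain rule, $X_hF=(X_hf)\,\psi'(f)$ and $X_h^2F=(X_hf)^2\psi''(f)+(X_h^2f)\,\psi'(f)$. Reading these in the frame $\{\psi'(f),\psi''(f)\}$ of $\bR^2$, which is a genuine frame precisely because $\psi$ is free, the coordinate matrix has determinant $(X_hf)^3$. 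Hence $F$ is $\cH$-free if and only if $X_hf\neq 0$ everywhere, simultaneously for \emph{every} free $\psi$, and the statement continues to make sense for multivalued $f$ because all quantities involved depend only on $df$. This is the observation that collapses the whole problem onto the scalar condition $X_hf\neq 0$.

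Next I would identify the geometry of $X_h$. From the definition of the Riemann--Poisson bracket (the Nambu-type bracket built from the volume form), $X_h$ is, up to sign, the metric dual of $\star(dh\wedge dh_1\wedge\cdots\wedge dh_{n-2})$; equivalently it is the vector field orthogonal to $\nabla h,\nabla h_1,\dots,\nabla h_{n-2}$ whose length is the $(n-1)$-volume of the parallelepiped they span. Functional independence of $h,h_1,\dots,h_{n-2}$ makes these gradients linearly independent, so $X_h$ is nowhere zero and $\cH=\span(X_h)=\ker dh\cap\bigcap_i\ker dh_i$. Thus $\cH$ is exactly the oriented one-dimensional foliation by the connected fibres of the submersion $\Pi=(h,h_1,\dots,h_{n-2}):M\to\bR^{n-1}$, and the target condition $X_hf\neq 0$ says precisely that $df,dh,dh_1,\dots,dh_{n-2}$ form a coframe at every point, i.e. that $f$ completes $\Pi$ to a local coordinate system.

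The remaining task, and the heart of the proof, is to build a possibly multivalued smooth $f$ that is strictly monotone along the flow of $X_h$, equivalently a closed $1$-form $\omega=df$ with $\omega(X_h)>0$ everywhere. Since $X_h\neq 0$, the straightening (flow-box) theorem supplies such an $f$ locally as the flow-time parameter, and I would globalize it using the submersion structure above, whose leaf space embeds in $\bR^{n-1}$ and is therefore Hausdorff, by choosing time-origins that depend smoothly on the fibre. Each connected leaf is a one-manifold, hence diffeomorphic either to $\bR$ or to $\bS^1$: on the line-type leaves the flow-time is single-valued and monotone, giving $A=\bR$, while on circle-type leaves (periodic orbits of $X_h$) the flow-time is defined only modulo the period, so $f$ becomes $\bS^1$-valued and $\omega$ acquires a nonzero period, giving $A=\bS^1$. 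This is the source of the dichotomy in the statement and the reason $\psi$ is allowed to be defined on $\bS^1$, which keeps $F=\psi\circ f$ single-valued.

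I expect the main obstacle to lie entirely in this last step: the global, smooth, nowhere-vanishing choice of the transverse flow-time function. Leafwise and locally it is immediate, but patching the local flow-times into one smooth function (or one closed $1$-form $\omega$ with $\omega(X_h)>0$ everywhere) is where the topology of $M$ and of the foliation enters, since one must anchor the time-origin consistently across leaves and absorb the monodromy around any periodic orbit into the multivaluedness. Having reduced the obstruction to the fibre foliation of a submersion onto an open subset of $\bR^{n-1}$ with one-dimensional oriented fibres should make this tractable without further hypotheses: a global transversal section exists over the line-type fibres, and the $\bS^1$-valued primitive handles the circle-type fibres. The positivity computation of the first step, by contrast, is routine once one works in the frame $\{\psi',\psi''\}$.
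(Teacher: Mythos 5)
Your first step is exactly the paper's: the determinant identity $\det D_{\xi_h,F}=(a'b''-a''b')(L_{\xi_h}f)^3$ reduces everything to producing a (possibly multivalued) $f$ with $L_{\xi_h}f>0$, which is precisely the content of the paper's Lemma~\ref{thm:W3}. The gap is in how you propose to produce that $f$. You assert that the leaf space of the foliation by connected fibres of $\Pi=(h,h_1,\dots,h_{n-2})$ ``embeds in $\bR^{n-1}$ and is therefore Hausdorff'', and on that basis you anchor a global flow-time by a smooth choice of time-origins (a global transversal section over the line-type leaves). This is false: $\Pi$ is a submersion, but its fibres can be disconnected, distinct connected components can be inseparable in the leaf space, and neither the Hausdorff claim nor the existence of a global transversal follows. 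The paper's own Figure~\ref{fig:p1} is a counterexample already for $n=2$: the level sets of the submersion $g(x,y)=(y^2-1)e^x$ on $\Rt$ are all line-type leaves, yet the foliation has two separatrices and admits no global transversal, even though a transverse function still exists. So the mechanism you lean on for what you correctly call the heart of the proof breaks exactly where the problem becomes nontrivial.

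The paper's Lemma~\ref{thm:W3} (following Weiner) avoids choosing any global time-origin. Around each point it takes a \emph{saturated} flow-box $U_p\simeq D\times X$ (with $X=\bR$ or $\bS^1$) cut out by $|\hat h_i|<2$, and builds a compactly supported $f_p$ as a product of bump functions in the renormalized $\hat h_i$ times a monotone step function of the leafwise coordinate, so that $\{h,f_p\}_H\ge0$ everywhere and $\{h,f_p\}_H>0$ on an inner region $B_p$; it then extracts a countable subcover and sums $f=\sum_k a_kf_{p_k}$ with weights $a_k$ chosen to force $C^\infty$ convergence. Positivity of $\{h,f\}_H$ follows because every term contributes nonnegatively and at each point at least one contributes positively; no Hausdorffness, local finiteness, or global section is needed, and the $\bS^1$-type fibres produce exactly the multivaluedness (nontrivial periods of the closed form $df$) that your statement anticipates. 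If you replace your globalization step by such a weighted series of saturated bump functions, the rest of your argument goes through and coincides with the paper's.
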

\section{$\cH$-free maps and the linearization of the operator ${\cal D}_\cH$}
\label{sec:free}
Let $\cH$ be a $k$-dimensional  distribution on $M$, i.e. a vector subbundle of $TM$. 
Fix local  coordinates $(x^\alpha)$ on some 
chart $U\subset M$, $\alpha=1,\cdots,m$, and let $\{\xi_a\}$, $a=1,\cdots,k$, be 
a local trivialization for $\cH$. Let $\{\theta^a,\omega^A\}$, $A=1,\cdots,m-k$, be a dual base 
for the whole $T^*M$ such that $i_{\xi_b}\theta^a=\delta_b^a$ and $i_{\xi_b}\omega^A=0$. 
Then 
$$\cH|_U=\bigcap_{A=1}^{m-k}\ker\omega^A.$$
and the gradient of the components of a smooth map $f=(f^1, \dots , f^q):M\to \Rq$ writes 
$$
df^i = u_A^i\omega^A\oplus v^i_a\theta^a\;,\;\;i=1,\cdots,q
$$
where
$v^i_a=i_{\xi_a} df^i=L_{\xi_a}f^i$ (the $u_A^i$ play no role in what follows). 
Then, in local terms, the restriction to $\cH\subset TM$ of $f^*\Eq=\delta_{ij}df^i\otimes df^j$ 
is given by
\begin{equation}
f^*\Eq|_\cH = \delta_{ij} L_{\xi_a}f^i L_{\xi_b}f^j\;\theta^a\otimes\theta^b
\end{equation}
and 
the equation $\cD_\cH(f)=g$ writes locally as
\begin{equation}  \label{eq:DH(f)}
\delta_{ij} L_{\xi_a}f^i L_{\xi_b}f^j = g_{ab},
\end{equation}
where $g_{ab}=g(\xi_a, \xi_b)$, $a,b=1,\dots , k$.

Let $\cH$ be a distribution on $M$. We say that $f\in C^\infty(M,\Rq)$ is an {\em $\cH$-immersion}
if the restriction to $\cH$ of the tangent map $Tf:TM\to T\Rq$ is injective.
\begin{example}\rm
Take $M=A\times B$, where $A$ and $B$ are smooth manifolds. Consider the two natural 
projections $\pi_A$ and $\pi_B$ on $A$ and $B$ and the corresponding two canonical distributions 
$\cH_A=\ker T\pi_B=TA\oplus\{0\}$ and $\cH_B=\ker T\pi_A=\{0\}\oplus TB$.
A map $f\in C^{\infty}(M, \Rq)$ is a $\cH_A$-immersion iff $f(\cdot,b):A\to\Rq$ 
is an immersion for every $b\in B$. Similarly for $\cH_B$.
\end{example}
\begin{example}\rm
For any fiber bundle $(M,N,\pi,F)$ it is defined the canonical distribution of
vertical vectors $V=\ker T\pi\subset TM$. A smooth map $f:M\to\Rq$ 
is a $V$-immersion iff on every trivialization $U\times F$ of $M$ the map 
$f(u,\cdot):F\to\Rq$ is an immersion for every $u\in U$. 
Let now $A$ be a linear connection on $M$ and let $H$ be the horizontal distribution 
with respect to $A$; then a map $f:M\to\Rq$ is a $H$-immersion iff the 
covariant derivatives $\{\nabla_\mu f^i\}$ are linearly independent on every point 
of $M$.
\end{example}
\begin{proposition}
Let $f\in C^\infty(M,\Rq)$. The quadratic form $\cD_\cH(f)$ 
is positive-definite iff $f$ is a $\cH$-immersion.
\end{proposition}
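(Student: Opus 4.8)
The plan is to use directly the fact that $\cD_\cH(f)$ is, by construction, the restriction to $\cH$ of the pull-back of the positive-definite Euclidean metric $\Eq$; this forces it to be positive-semidefinite fibrewise and makes its definiteness equivalent to the injectivity of the restricted differential. The whole argument is linear-algebraic and pointwise, so I would fix $x\in M$ and work in the fibre $\cH_x$.

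First I would evaluate the quadratic form on an arbitrary $v\in\cH_x$. Writing $v=v^a\xi_a$ in the trivialization $\{\xi_a\}$, the components of $T_xf(v)\in T_{f(x)}\Rq\cong\Rq$ are $(T_xf(v))^i=v^a L_{\xi_a}f^i$, so the local expression $\cD_\cH(f)=\delta_{ij}L_{\xi_a}f^i L_{\xi_b}f^j\,\theta^a\otimes\theta^b$ gives
\[
\cD_\cH(f)(v,v)=\delta_{ij}\,(v^a L_{\xi_a}f^i)(v^b L_{\xi_b}f^j)=|T_xf(v)|^2 ,
\]
where $|\cdot|$ denotes the Euclidean norm. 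Invariantly this is just the statement $\cD_\cH(f)(v,v)=\Eq(T_xf(v),T_xf(v))$, i.e. $\cD_\cH(f)$ is the pull-back of $\Eq$ under $T_xf|_{\cH_x}$.

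From this identity both implications are immediate. Since $\Eq$ is positive-definite, $\cD_\cH(f)(v,v)=|T_xf(v)|^2\geq 0$ for every $v$, so $\cD_\cH(f)$ is always positive-semidefinite, and it vanishes on $v$ exactly when $T_xf(v)=0$. Hence $\cD_\cH(f)$ is positive-definite at $x$ iff $\Ker\bigl(T_xf|_{\cH_x}\bigr)=\{0\}$, i.e. iff $T_xf|_{\cH_x}$ is injective. To conclude I would simply note that a quadratic form on $\cH$ is positive-definite iff it is so at every point, and that $f$ is an $\cH$-immersion iff $T_xf|_{\cH_x}$ is injective at every point; globalizing the pointwise equivalence over $M$ then yields the claim.

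There is essentially no real obstacle in this proof: the only things needing care are the index bookkeeping that identifies $\cD_\cH(f)(v,v)$ with $|T_xf(v)|^2$, which is already contained in the local formula above, and the remark that both properties in the statement are pointwise, so that the fibrewise equivalence upgrades to the global one with no extra analysis.
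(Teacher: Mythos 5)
Your argument is correct and coincides with the paper's own proof, which likewise computes $Tf(\xi_a)=(L_{\xi_a}f^i)\partial_i$ in a local trivialization and reads off that $\cD_\cH(f)$ is the Gram form $\Eq(T_xf(\cdot),T_xf(\cdot))$ on $\cH_x$, hence positive-definite exactly when $T_xf|_{\cH_x}$ is injective. Your write-up merely spells out the pointwise linear algebra that the paper leaves implicit in ``the proposition follows.''
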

\begin{proof}
Let   $\{\xi_a\}$ be  a local trivialization for $\cH$. 
Then 
$$Tf(\xi_a)=\partial_\alpha f^i\partial_i\otimes dx^\alpha(\xi_a^\beta\partial_\beta)
=\xi_a^\alpha\partial_\alpha f^i\partial_i=(L_{\xi_a}f^i)\partial_i$$
and the proposition follows.
\end{proof}
\begin{proposition}
Let $\cH\subset TM$ be a $k$-dimensional distribution.
If $q\geq m+k$ the set of  $\cH$-immersions is open and dense
in  $C^\infty(M,\Rq)$.
\end{proposition}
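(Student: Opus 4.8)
The plan is to recast the $\cH$-immersion condition as a pointwise full-rank condition on the first jet of $f$ and then to run the same transversality argument that underlies Theorem~N2. By the preceding proposition and the computation $Tf(\xi_a)=(L_{\xi_a}f^i)\partial_i$, a map $f$ is an $\cH$-immersion precisely when, at every $x\in M$, the $k$ vectors $(L_{\xi_a}f^i(x))_{i=1,\dots,q}\in\Rq$, $a=1,\dots,k$, are linearly independent. Equivalently, the restriction $Tf|_\cH$ defines a section of the bundle $\Hom(\cH,\underline{\Rq})$ (with $\underline{\Rq}$ the trivial rank-$q$ bundle) that depends only on $j^1f$, and $f$ is an $\cH$-immersion iff this section takes values in the open sub-bundle of injective homomorphisms. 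Note that, under a change of local trivialization $\{\xi_a\}$, the associated $k\times q$ matrix of directional derivatives is multiplied by an element of $GL(k)$, so the rank is intrinsic and the bad locus introduced below is globally well defined.

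For openness I would observe that injectivity of a linear map $\bR^k\to\Rq$ is an open condition, and the matrix $(L_{\xi_a}f^i)$ depends continuously on the $1$-jet of $f$; hence the set of $\cH$-immersions is open in the Whitney $C^\infty$ topology (and in the ordinary $C^\infty$ topology when $M$ is compact).

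For density I would appeal to Thom's jet transversality theorem. Inside the total space $\Hom(\cH,\underline{\Rq})$ let $\Sigma_0$ be the subset of non-injective homomorphisms, i.e. those of rank $\le k-1$. This is a closed Whitney-stratified subvariety whose fibre-codimension, by the standard count for rank-deficient $q\times k$ matrices, equals $(q-(k-1))(k-(k-1))=q-k+1$. Because the natural restriction map $J^1(M,\Rq)\to\Hom(\cH,\underline{\Rq})$ sending a $1$-jet to $Tf|_\cH$ is a fibrewise submersion (restriction of a linear map to the subspace $\cH_x$ is surjective onto $\Hom(\cH_x,\Rq)$), the preimage $\Sigma\subset J^1(M,\Rq)$ is again stratified of codimension $q-k+1$. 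Thom's theorem then supplies a residual, hence dense, set of $f\in C^\infty(M,\Rq)$ for which $j^1f\pitchfork\Sigma$. Since $\dim M=m$ and $\Sigma$ has codimension $q-k+1$, the hypothesis $q\ge m+k$ yields $m<q-k+1$, so transversality forces $j^1f(M)\cap\Sigma=\emptyset$; that is, such an $f$ is an $\cH$-immersion, and density follows.

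The hard part is not the transversality machinery, which is classical, but making the bad locus $\Sigma$ genuinely global: the trivialization $\{\xi_a\}$ is only local, so I would either patch the local strata using the $GL(k)$-invariance of rank noted above, or, more cleanly, work directly in the intrinsically defined bundle $\Hom(\cH,\underline{\Rq})$ and verify once and for all that the degeneracy locus there is a closed Whitney-stratified subvariety to which jet transversality applies. A consistency check is reassuring: for $\cH=TM$ one has $k=m$, the bound reads $q\ge 2m$, and the statement reduces to the classical genericity of immersions.
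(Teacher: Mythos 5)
Your argument is correct and follows essentially the same route as the paper's own (much terser) proof: both reduce $\cH$-immersivity to the first-derivative matrix $(L_{\xi_a}f^i)$ having maximal rank, invoke the codimension count $q-k+1$ for the locus of rank-deficient $k\times q$ matrices, and conclude by transversality that a generic $f$ avoids this locus when $m<q-k+1$, i.e.\ $q\geq m+k$. Your additional care in globalizing the degeneracy locus via $GL(k)$-invariance of the rank, and your explicit treatment of openness, are points the paper leaves implicit but do not change the approach.
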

\begin{proof}
A map $f:M\to\Rq$ is a $\cH$-immersion iff the $k\times q$ matrix $(L_{\xi_a} f^i)$
has rank  $k$ at every point. 
Denote by $D:M\to M_{k,q}(\bR)$ the 
corresponding map with $D=(L_{\xi_a} f^i)$.
The image $D(M)$ and the set of non maximal rank matrices whose codimension in   
$M_{k,q}(\bR)$ is  $q-k+1$ \cite{Arn71}
do not intersect  if $m<q-k+1$. 
\end{proof}
Let us consider now a smooth 1-parameter deformation ${g_{_\epsilon}}$ of the metric $g$ on $M$
such that ${g_{_0}}=g$ and assume that there exists a corresponding smooth 1-parameter 
deformation ${f_{_\epsilon}}$ such that  ${f_{_0}=f}$. It follows by (\ref{eq:DH(f)}) that 
$$
\delta_{ij}L_{\xi_a}{f_{_\epsilon}}^i L_{\xi_b}{f_{_\epsilon}}^j=g_{\epsilon, ab}\;, 
$$
where $g_{\epsilon, ab}=g_{\epsilon}(\xi_a, \xi_b)$.
Differentiate with respect to $\epsilon$ and set
$$
\delta f^i=\frac{d f^i_{_\epsilon}}{d\epsilon}\bigg|_{\epsilon=0}\;,\;\;\delta {g}_{ab}=\frac{d g_{\epsilon, ab}}{d\epsilon}\bigg|_{\epsilon=0}
$$
thus obtainining the system of $k(k+1)/2$ PDEs:
$$
\delta_{ij}\big( L_{\xi_a}f^i(x) \delta [L_{\xi_b} f^j(x)] + 
\delta [L_{\xi_a} f^i(x)] L_{\xi_b}f^j(x) \big)=\delta g_{ab}(x).
$$
Since
$$L_{\xi_a}f^i \delta [L_{\xi_b} f^j] = 
L_{\xi_b}[L_{\xi_a}f^i\delta f^j] - L_{\xi_b}L_{\xi_a} f^i \delta f^j$$
by defining $\psi_a(x)=\delta_{ij} L_{\xi_a}f^i(x) \delta f^j(x)$ 
one gets the following equivalent algebraic system in the $q$ unknown $\delta f^j$:
\begin{equation}
\label{eq:mainsystem}
\begin{cases}
  \delta_{ij}\,\,L_{\xi_a} f^i\delta f^j &= \psi_a\cr
  \delta_{ij}\,\,(L_{\xi_a}L_{\xi_b}f^i+L_{\xi_b}L_{\xi_a}f^i) \delta f^j &= L_{\xi_a}\psi_b + L_{\xi_b}\psi_a - \delta g_{ab}\cr
\end{cases}
\end{equation}
where the $\psi_a$ are arbitrary functions.

A sufficient condition for this system to be solvable is that the matrix
\begin{equation}
\label{eq:Dx}
D_{\xi_1,\cdots,\xi_k,f}=
\begin{pmatrix}
L_{\xi_1} f^1&\cdots&L_{\xi_1} f^q\cr
\vdots&\vdots&\vdots\cr
L_{\xi_k} f^1&\cdots&L_{\xi_k} f^q\cr
\noalign{\medskip}
L^2_{\xi_1} f^1&\cdots&L^2_{\xi_1} f^q\cr
\noalign{\medskip}
L_{\xi_1}L_{\xi_2} f^1 + L_{\xi_2}L_{\xi_1} f^1&\cdots&L_{\xi_1}L_{\xi_2} f^q + L_{\xi_2}L_{\xi_1} f^q\cr
\vdots&\vdots&\vdots\cr
L^2_{\xi_k} f^1&\cdots&L^2_{\xi_k} f^q\cr
\end{pmatrix}
\end{equation}
has maximal rank. Equivalently, 
the vectors 
$$
L_{\xi_a}f^i,\;\{L_{\xi_a},L_{\xi_b}\}f=\;L_{\xi_a}L_{\xi_b}f^i+L_{\xi_b}L_{\xi_a}f^i
$$
must be linearly independent. 
Notice that we need to assume $q\geq2$, since the matrix~(\ref{eq:Dx}) has at least two lines.\\

\begin{definition}
Let $\cH\subset TM$ be  a $k$-dimensional  distribution on a smooth manifold $M$. We say that a 
smooth map  
$f:M\rightarrow \Rq$ is {\em $\cH$-free at $x\in M$} if 
(in a neighbourhood $U$ of $x$) there exists a trivialization $\{\xi_a\}$ 
of $\cH$  such that 
$$\rk D_{\xi_1,\cdots,\xi_k,f}=k+s_k, \ s_k=\frac{k(k+1)}{2}\;.$$
\end{definition}

Notice that $\cH$-free maps only can exist  for $q\geq k+s_k$. Moreover every $\cH$-free map 
is an $\cH$-immersion so that  the
quadratic differential form $\cD_\cH(f)$ induced on $\cH\subset TM$ by a $\cH$-free map 
$f:M\to\Rq$ is always positive-definite (i.e. $\cD_\cH(f)$ is a Riemannian metric on $\cH$).

We do not deal here with the question of inducing a given metric on a given  $\cH$.
We shall return to this problem in another paper where the notion of partial isometry will be considered in a more general
context  (see  \cite{Gr70B}).\\

Next proposition shows that the above definition is well posed.
\begin{proposition}
\label{thm:invrank}
The rank of the matrix $D_{\xi_1,\cdots,\xi_k,f}$ given by (\ref{eq:Dx}) does not depend on 
the particular choice of the trivialization of $\cH$. 
\end{proposition}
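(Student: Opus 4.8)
The plan is to show that the row space of $D_{\xi_1,\dots,\xi_k,f}$, regarded at each fixed point $x\in M$ as the subspace of $\Rq$ spanned by its $k+s_k$ row vectors, does not depend on the chosen trivialization of $\cH$. Since the rank of a matrix equals the dimension of its row space, this immediately yields the proposition. Let $\{\xi_a\}$ and $\{\eta_a\}$ be two local trivializations of $\cH$ defined near $x$, related by $\eta_a=\sum_b A_a^b\,\xi_b$ for a matrix $A=(A_a^b)$ of smooth functions with $A(x)\in GL(k,\bR)$ at every point. The first step is to record how the two blocks of rows transform. For the first-order block, linearity of the Lie derivative in the field gives at once $L_{\eta_a}f^i=\sum_b A_a^b\,L_{\xi_b}f^i$, so the new first-order rows lie in the span of the old first-order rows.

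For the second-order block I would expand the symmetrized row $L_{\eta_a}L_{\eta_b}f^i+L_{\eta_b}L_{\eta_a}f^i$ using the Leibniz rule $L_{\eta_a}\big(A_b^c\,L_{\xi_c}f^i\big)=(L_{\eta_a}A_b^c)\,L_{\xi_c}f^i+A_b^c\,L_{\eta_a}L_{\xi_c}f^i$ together with $L_{\eta_a}L_{\xi_c}f^i=\sum_d A_a^d\,L_{\xi_d}L_{\xi_c}f^i$. The key computation is that after symmetrizing in the pair $a,b$ the purely second-order contribution collects into
$$\sum_{c,d}A_a^d A_b^c\big(L_{\xi_d}L_{\xi_c}f^i+L_{\xi_c}L_{\xi_d}f^i\big),$$
i.e.\ a combination of the old symmetrized second-order rows, whereas the terms produced by differentiating the transition functions take the form $\sum_c\big(L_{\eta_a}A_b^c+L_{\eta_b}A_a^c\big)\,L_{\xi_c}f^i$, hence are proportional to the old first-order rows.

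This is the crucial point, and the main obstacle to get right. Symmetrization in $a,b$ is exactly what forces the antisymmetric commutator pieces $L_{[\xi_c,\xi_d]}f^i$ to cancel; these would otherwise survive and need \emph{not} lie in the span of the old rows when $\cH$ is non-involutive, since $[\xi_c,\xi_d]$ may leave $\cH$. Likewise, differentiating $A$ lowers the differential order by one, so the correction terms fall back into the already-present first-order block rather than escaping to some higher-order or off-distribution term. Consequently, at the point $x$ every row of $D_{\eta_1,\dots,\eta_k,f}$ is a numerical linear combination (with coefficients built from $A(x)$ and its Lie derivatives) of the rows of $D_{\xi_1,\dots,\xi_k,f}$, so the new row space is contained in the old one.

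Finally, since $\xi_b=\sum_a(A^{-1})_b^a\,\eta_a$ with $A^{-1}$ again an invertible matrix of smooth functions, the very same identities applied with the roles of the two trivializations interchanged give the reverse inclusion. Hence the two row spaces coincide at every point, their dimensions agree, and therefore $\rk D_{\xi_1,\dots,\xi_k,f}=\rk D_{\eta_1,\dots,\eta_k,f}$, which is the assertion of Proposition~\ref{thm:invrank}. Equivalently, one may phrase the computation above as $D_{\eta}=P\,D_{\xi}$ for a block lower-triangular matrix $P=\left(\begin{smallmatrix}A&0\\ B&S\end{smallmatrix}\right)$ whose diagonal blocks are $A$ and the second symmetric power $S=\mathrm{Sym}^2 A$; both are invertible (as $\mathrm{Sym}^2$ is a representation of $GL(k,\bR)$), so $P$ is invertible and left multiplication preserves rank.
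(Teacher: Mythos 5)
Your proof is correct and follows essentially the same route as the paper's: both compute how the first-order and symmetrized second-order rows transform under a change of trivialization, observe that the derivative-of-the-transition-matrix terms fall into the span of the first-order rows, and conclude that $D_\eta = P\,D_\xi$ for an invertible block-triangular $P$ with diagonal blocks $A$ and $\mathrm{Sym}^2A$. Your explicit remark that the symmetrization is what kills the commutator terms $L_{[\xi_c,\xi_d]}f$ (which would otherwise escape $\cH$ for non-involutive distributions) is a welcome clarification that the paper's terser computation leaves implicit.
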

\begin{proof}
Take another trivialization $\{\zeta_a\}$ of $\cH$ in the same neighbourhood $U(x)$ 
of $x$. Then 
$\zeta_a(x)=\lambda_a^b(x)\xi_b(x)$ for some local section $\lambda_a^b(x)$ of the
frame bundle over $\cH$ and
$$L_{\zeta_a} f = \lambda_a^b L_{\xi_b} f\,,\;\;L_{\zeta_a}L_{\zeta_b} f = 
\lambda_a^c L_{\xi_c}\lambda_b^d L_{\xi_d} f + \lambda_a^c\lambda_b^d L_{\xi_c} L_{\xi_d} f\;.$$
Clearly $\rk(L_{\zeta_a} f^i)=\rk(L_{\xi_a} f^i)$. Hence
$$
\rk
\begin{pmatrix}
  L_{\zeta_a} f\cr
  \{L_{\zeta_a},L_{\zeta_b}\} f\cr
\end{pmatrix}
=
\rk
\begin{pmatrix}
  L_{\xi_a} f\cr
  (\lambda_a^c L_{\xi_c}\lambda_b^d + \lambda_b^c L_{\xi_c}\lambda_z^d) L_{\xi_d} f 
  + \lambda_a^c \lambda_b^d \{L_{\xi_a},L_{\xi_b}\}f
\end{pmatrix}
$$
$$
=
\rk
\begin{pmatrix}
  L_{\xi_a} f\cr
  \lambda_a^c \lambda_b^d \{L_{\xi_a},L_{\xi_b}\}f
\end{pmatrix}
=
\rk
\begin{pmatrix}
  L_{\xi_a} f\cr
  \{L_{\xi_a},L_{\xi_b}\}f
\end{pmatrix}
$$
\end{proof}

\begin{example} \label{ex:1distr}\rm
It is known (see~\cite{Kap40}) that every one-dimensional distribution $\cH$ on
the plane  $\Rt$ is orientable and then  it is the kernel of a 
regular~\footnote{Throughout this paper we call a vector field or a $k$-form {\em regular} 
  if they are different from zero at every point of $M$.}
1-form $\omega$.
The metric induced on $\cH=\ker\omega$ by a map $f:\Rt\to\Rt$,
$f(x,y)=(\alpha(x,y),\beta(x,y))$, is, by Eq.~(\ref{eq:DH(f)}), 
$$\cD_\cH(f)=[(L_\xi \alpha)^2+(L_\xi \beta)^2](*\omega)^2$$
where $*$ is the Euclidean  Hodge operator.
Then $f$ is a $\cH$-immersion iff $L_\xi \alpha$ and $L_\xi \beta$ 
do not vanish simultaneously at any point.
Here the matrix $D_{\xi,f}$ is given by the $2\times2$ matrix
$$D_{\xi,f}=
\begin{pmatrix}
  L_\xi \alpha&L_\xi \beta\cr
  \noalign{\medskip}
  L^2_\xi \alpha&L^2_\xi \beta\cr
\end{pmatrix}\;.$$
Therefore $f$ is $\cH$-free iff there is a regular section $\xi$ of $\cH$ such that  
$$L_\xi \alpha\; L^2_\xi \beta - L_\xi \beta\; L^2_\xi \alpha>0$$ 
on the whole plane $\Rt$.
\end{example}
We show below a few concrete examples of $\cH$-free maps $f:M\rightarrow\Rq$ for $q=k+s_k$  
and either $\dim\cH=1$ or $\cod\cH=1$.

\begin{example}\rm\label{exfree}
 Take a regular vector field $\xi\in\vf(\Rm)$ and $\cH=\span\{\xi\}$. 
 Assume that $\xi$ has a component always different from zero, e.g. $\xi^1=1$.
 Then $L_\xi x^1=1$ and a direct calculation shows that the function $F(x)=\psi(x^1)$ 
 is $\cH$-free for every free map $\psi:\bR\to\Rt$ (for example, one can take $\psi(t)=(t,e^t)$ 
 or $\psi(t)=(\sin t,\cos t)$).
\end{example}
\begin{example}\rm
 Consider a Riemannian manifold $(M,g)$ and a regular vector field $\xi$ on $M$ 
 which is the gradient of some function $f$, i.e. $\xi^\alpha=g^{\alpha\beta}\partial_\beta f$.
 Then $L_\xi f = \|\xi\|^2>0$ and, as in the previous example, $F(x)=\psi(f(x))$ 
 is $\cH$-free (for $\cH=\span\{\xi\}$) for any free map $\psi:\bR\to\Rt$ .
\end{example}
\begin{example}\rm
 Take the two commuting vector fields $\xi_1=(\cos y,-\sin y,0)$ and $\xi_2=(0,0,1)$ 
 and consider the (integrable) distribution $\cH=\span\{\xi_1,\xi_2\}\subset \T\bR^3$. 
 The leaves of this $\cH$ are the direct product of the level sets  $f(x,y)=e^x\sin(y)$
 with the $z$ axis and, the space of leaves being  not Hausdorff, this foliation is 
 not topologically equivalent to the trivial one of $\bR^3$.
 A direct computation gives that $L_{\xi_1}(e^x\cos y)=e^{2x}>0$ and $L_{\xi_2}z=1>0$
 so that the map $f:\bR^3\to\bR^5$ defined by
 $$
 f(x,y,z) = (\psi(e^x\cos y),\varphi(z),ze^x\cos y))
 $$
 is $\cH$-free for every pair of free maps $\psi,\varphi:\bR\to\Rt$.
\end{example}
\begin{example}\rm
 Let  $\omega=y\,dx-dz$ be the canonical contact structure on $\bR^3$
 and $\cH=\ker\omega\subset\T\bR^3$ the corresponding (non-integrable)
 codimension-1 distribution on $\bR^3$. Equivalently,
 $\cH=\span\{\xi_1,\xi_2\}$ for  $\xi_1=(0,1,0)$   and $\xi_2=(1,0,-y)$. 
 The fields  $\xi_1$ and $\xi_2$ are both transversal
 to the level sets of the function $f:\bR^3\to\bR^5$ defined by
 $$
 f(x,y,z) = (y,x,e^y,e^x,z)
 $$
 Here the matrix~(\ref{eq:Dx}) writes:
 $$
 D_{\xi_1,\xi_2,f}=
 \begin{pmatrix}
   1&0&e^y&0&0\cr
   \noalign{\medskip}
   0&1&0&e^x&-y\cos y\cr
   \noalign{\medskip}
   0&0&e^y&0&0\cr
   \noalign{\medskip}
   0&0&0&0&-1\cr
   \noalign{\medskip}
   0&0&0&e^x&0\cr
 \end{pmatrix}
 $$
 and $f$ is $\cH$-free as $\det D_{\xi_1,\xi_2,f}=e^{x+y}>0$.
\end{example}

In the following proposition we state a characterization of $\cH$-freedom which,  in  all respects
(including the argument for its proof), is similar to that of freedom for maps $f:M\rightarrow \Rq$
(compare \cite{Gr70A}, p.45).
\begin{proposition}
 Let $\cH$ be a distribution on a smooth manifold $M$, let $S=S^2\cH^*$ be the set of its 
 symmetric $(0,2)$ tensors and $N=(Tf(\cH))^\perp$ the normal bundle to $Tf(\cH)$ in $\Rq$
 with respect to $\Eq$.
 Then a $\cH$-immersion $f:M\to\Rq$ is $\cH$-free iff the ``Wintergarten map'' 
 $\nu:N\to S$ defined by 
 $$
 \nu_x(n_x)=\{L_{\xi_a},L_{\xi_b}\}f^i(x)\delta_{ij}n_x^j\,\theta^a\otimes\theta^b
 $$ 
 is surjective.
\end{proposition}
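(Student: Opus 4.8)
The plan is to reduce the statement to a pointwise exercise in linear algebra inside the Euclidean space $\Rq$, since both $\cH$-freedom and the surjectivity of $\nu$ are conditions imposed fiber by fiber. Fix $x\in M$ and a local trivialization $\{\xi_a\}$ of $\cH$; by Proposition~\ref{thm:invrank} the outcome will not depend on this choice. I would write $v_a = L_{\xi_a}f(x)$ and $w_{ab} = \{L_{\xi_a},L_{\xi_b}\}f(x)$ for the $k$ first-order and $s_k$ symmetrized second-order vectors occurring as the rows of $D_{\xi_1,\cdots,\xi_k,f}$, and set $V = Tf(\cH_x) = \span\{v_1,\cdots,v_k\}\subset\Rq$. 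Because $f$ is an $\cH$-immersion, $\dim V = k$ and the $v_a$ form a basis of $V$; hence, unwinding the definition of $\cH$-freedom as $\rk D_{\xi_1,\cdots,\xi_k,f}=k+s_k$, one sees that $f$ is $\cH$-free at $x$ precisely when the $s_k$ vectors $w_{ab}$ ($a\le b$) are linearly independent modulo $V$, i.e. when their images in the quotient $\Rq/V$ are linearly independent.

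Next I would identify the normal space $N_x = V^{\perp}$ with $\Rq/V$ through the orthogonal projection $P_N:\Rq\to N_x$, which restricts to an isomorphism $\Rq/V\cong N_x$. The key observation is that $\nu$ only detects the $V$-transverse part of the $w_{ab}$: since $n_x\in N_x$ is orthogonal to $V$, we have $\delta_{ij}w_{ab}^i n_x^j = \langle w_{ab},n_x\rangle = \langle P_N w_{ab},n_x\rangle$. Therefore, under the linear identification $S_x = S^2\cH_x^*\cong\bR^{s_k}$ furnished by the basis $\{\theta^a\otimes\theta^b\}_{a\le b}$, the map $\nu_x$ reads $n_x\mapsto(\langle P_N w_{ab},n_x\rangle)_{a\le b}$; that is, it is nothing but the pairing of $n_x$ with the $s_k$ projected vectors $P_N w_{ab}\in N_x$.

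Finally I would invoke the standard rank--duality argument. The transpose of $\nu_x:N_x\to\bR^{s_k}$ with respect to the Euclidean inner products is the map $\bR^{s_k}\to N_x$ sending the dual basis vector indexed by $(a,b)$ to $P_N w_{ab}$, i.e. $(c_{ab})\mapsto\sum_{a\le b}c_{ab}\,P_N w_{ab}$. Hence $\nu_x$ is surjective iff this transpose is injective, iff the vectors $P_N w_{ab}$ are linearly independent in $N_x$. Since $\ker P_N = V$, the relation $\sum_{a\le b}c_{ab}\,P_N w_{ab}=0$ is equivalent to $\sum_{a\le b}c_{ab}\,w_{ab}\in V$, so independence of the $P_N w_{ab}$ is exactly independence of the $w_{ab}$ modulo $V$. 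By the first paragraph this is equivalent to $\cH$-freedom at $x$, and as the equivalence holds at every $x$ the proposition follows.

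I do not expect a genuine analytic obstacle: the argument is entirely finite-dimensional linear algebra carried out pointwise, paralleling the classical characterization of freedom via the second fundamental form (compare~\cite{Gr70A}, p.45). The only points requiring care are the bookkeeping that $\ker P_N = V$, so that ``independence of the projections'' and ``independence modulo $V$'' genuinely coincide, and the implicit numerology that surjectivity of $\nu$ presupposes $\dim N_x = q-k\ge s_k$, i.e. $q\ge k+s_k$, in agreement with the necessary condition already recorded for the existence of $\cH$-free maps.
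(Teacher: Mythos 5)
Your argument is correct, and it is exactly the rank--duality computation the paper has in mind: the paper itself gives no proof of this proposition, saying only that the argument is ``similar to that of freedom for maps $f:M\to\Rq$'' in Gromov--Rokhlin, and what you wrote is precisely that classical argument adapted to $\cH$ (reduce to the pointwise statement that, the $L_{\xi_a}f$ being independent by the $\cH$-immersion hypothesis, freedom is independence of the symmetrized second derivatives modulo $V=Tf(\cH_x)$, then identify surjectivity of $\nu_x$ with independence of their projections to $N_x=V^\perp$ via the transpose). The two points you flag --- that $\ker P_N=V$ so the two notions of independence coincide, and the numerology $q\ge k+s_k$ --- are indeed the only places where care is needed, and you handle both; the choice-of-trivialization issue is correctly delegated to Proposition~\ref{thm:invrank}.
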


The next proposition is the analogous of Theorems~N1 and~N2   for partial isometries:

\begin{theorem} 
\label{thm:genericHfree}
Let $\cH\subset TM$ be a $k$-distribution on $M$, $\dim M=m$. 
The operator $\cD_\cH$ is infinitesimally invertible on the set 
of $\cH$-free maps $f:M\to\Rq$. Moreover, if $q\geq m + k + s_k$,
a generic  map $M\to\Rq$ is  $\cH$-free.
\end{theorem}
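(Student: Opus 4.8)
The plan is to treat the two assertions separately: the first falls out of the linearization already carried out, while the second is a Thom transversality argument modelled on the proof of Theorem~N2.

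\emph{Infinitesimal invertibility.} By the convention adopted after Theorem~IFT it suffices to exhibit, at each $\cH$-free $f$, a right inverse of the linearization of $\cD_\cH$ depending on $f$ through a differential operator. This is essentially contained in the passage leading to~(\ref{eq:mainsystem}). First I would fix a trivialization $\{\xi_a\}$ of $\cH$ and make the simplest admissible choice $\psi_a\equiv0$ of the free functions; the linearized equation then reduces to the purely algebraic system~(\ref{eq:mainsystem}) whose coefficient matrix, read off the unknowns $\delta f^j$, is exactly the matrix $D_{\xi_1,\cdots,\xi_k,f}$ of~(\ref{eq:Dx}) and whose right-hand side is $(0,\dots,0,-\delta g_{ab})$. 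By definition $\cH$-freedom means $\rk D_{\xi_1,\cdots,\xi_k,f}=k+s_k$, i.e. this $(k+s_k)\times q$ matrix has maximal rank; since $k+s_k\le q$ the system is solvable for every $\delta g$, and one recovers $\delta f$ by applying to the right-hand side a right inverse of $D_{\xi_1,\cdots,\xi_k,f}$. The resulting assignment $\delta g\mapsto\delta f$ is algebraic in $\delta g$ and has coefficients that are rational functions of the entries of $D_{\xi_1,\cdots,\xi_k,f}$, hence smooth functions of the $2$-jet of $f$ on the \emph{open} set of $\cH$-free maps. This is precisely a right inverse of the linearization of the required form, so $\cD_\cH$ is infinitesimally invertible there; by Proposition~\ref{thm:invrank} the construction is independent of the chosen trivialization.

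\emph{Genericity.} The failure of $\cH$-freedom is governed by a rank drop of~(\ref{eq:Dx}), and since by Proposition~\ref{thm:invrank} this rank is independent of the trivialization, the corresponding ``bad set'' is intrinsically defined. Over a chart $U$ on which a trivialization $\{\xi_a\}$ has been fixed, each entry $L_{\xi_a}f^i$ and $\{L_{\xi_a},L_{\xi_b}\}f^i$ is a linear function of the $2$-jet of $f$, so that there is a well-defined map
$$\Phi:J^2(U,\Rq)\longrightarrow M_{k+s_k,q}(\bR),$$
and $f$ is $\cH$-free at $x$ iff $\Phi(j^2f(x))$ has maximal rank. Let $\Sigma\subset M_{k+s_k,q}(\bR)$ denote the matrices of non-maximal rank; since $k+s_k\le q$, its codimension is $q-(k+s_k)+1$ \cite{Arn71}. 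I would then check that $\Phi$ is a submersion, so that $W:=\Phi^{-1}(\Sigma)$ is a stratified subset of $J^2(U,\Rq)$ of the same codimension $q-(k+s_k)+1$. Covering $M$ by countably many such charts, Thom's transversality theorem provides a residual set of maps $f$ for which $j^2f$ is transverse to $W$ on all of $M$; transversality then forces $(j^2f)^{-1}(W)$ to be a submanifold of $M$ of codimension $q-(k+s_k)+1$. As soon as $q\ge m+k+s_k$ this codimension exceeds $m=\dim M$, whence $(j^2f)^{-1}(W)=\emptyset$ and such an $f$ is $\cH$-free at every point. Density follows, and openness is immediate because maximal rank of~(\ref{eq:Dx}) is an open condition in the Whitney topology; thus $\cH$-free maps are generic for $q\ge m+k+s_k$, exactly as asserted.

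The hard part will be the submersivity of $\Phi$, i.e. verifying that the second-order jet variables realize the ``curvature'' rows $\{L_{\xi_a},L_{\xi_b}\}f^i$ freely and independently of the first-order rows $L_{\xi_a}f^i$. The point is that the first $k$ rows depend only on the first derivatives of the $f^i$, while the principal part of the remaining $s_k$ rows is $\sum_{\mu,\nu}(\xi_a^\mu\xi_b^\nu+\xi_b^\mu\xi_a^\nu)\partial_{\mu\nu}f^i$; since the $\{\xi_a\}$ are pointwise independent, the symmetric products $\{\xi_a\odot\xi_b\}_{a\le b}$ are linearly independent in $S^2T_xM$, so for each $i$ the second derivatives $\partial_{\mu\nu}f^i$ surject onto the $i$-th column of the curvature block. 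This linear independence is exactly what guarantees that $\Phi$ is onto and that $\Sigma$ pulls back to a set of the same codimension $q-(k+s_k)+1$; everything else is the standard transversality dimension count used for Theorem~N2.
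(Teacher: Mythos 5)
Your proposal is correct and follows essentially the same route as the paper: the infinitesimal invertibility is obtained by algebraically inverting the system~(\ref{eq:mainsystem}) using the maximal rank of~(\ref{eq:Dx}), and the genericity is the same transversality/codimension count against the stratified set of non-maximal-rank matrices, which you merely formalize in the $2$-jet bundle with an explicit submersivity check. If anything, your bookkeeping is the more reliable one: treating~(\ref{eq:Dx}) as a $(k+s_k)\times q$ matrix gives the codimension $q-(k+s_k)+1$ and hence exactly the stated bound $q\geq m+k+s_k$, whereas the printed proof drops the $k$ (writing $M_{s_k,q}(\bR)$ and $c=q+1-s_k$) and so ends with a bound that does not match the theorem.
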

\begin{proof}
 The infinitesimally invertibility of $\cD_\cH$ follows directly from the definition 
 of $\cH$-freedom. Observe that a map $f:M\rightarrow \Rq$ is $\cH$-free
 when the image of the map 
 $$
 D_{\xi_1,\cdots,\xi_k,f}:M\to M_{s_k,q}(\bR)
 $$
 is contained in the set of matrices of maximal rank.
 In particular a map is {\em not} $\cH$-free when the image of $D_{\xi_1,\cdots,\xi_k,f}$ 
 intersects the set $\cN_{s_k,q}$ of matrices of non-maximal rank, whose codimension is 
 $c=q+1-s_k$~\cite{Arn71}. For a generic $f$ the image $D_{\xi_1,\cdots,\xi_k,f}(M)$
 and $\cN_{s_k,q}$ are transversal and therefore they do not have points in common
 when $\dim D_{\xi_1,\cdots,\xi_k,f}(M)<\cod\cN_{s_k,q}$, 
 namely for $m<c$. Hence a generic map $f$ is $\cH$-free for $q > m - 1 + s_k$.
\end{proof}

By combining  Theorem \ref{thm:genericHfree}  and Theorem IFT we get as a corollary:

\begin{theorem} 
 \label{thm:NGHfree}
 The restriction of $\cD_\cH$ to the subset of $\cH$-free maps of $C^\infty(M,\Rq)$
 is an open map.
\end{theorem}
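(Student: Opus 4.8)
The plan is to obtain the statement as an immediate consequence of Theorem~IFT applied to the operator $\cD_\cH$, once the hypotheses of that theorem are verified on the set of $\cH$-free maps. Theorem~IFT requires two ingredients: that the operator be a $C^\infty$ differential operator, and that it be infinitesimally invertible over some \emph{open} subset $U\subset C^\infty(M,\Rq)$. The first is clear from the local expression~(\ref{eq:DH(f)}), which exhibits $\cD_\cH(f)$ as a (first-order, quadratic) polynomial differential operator in the components $f^i$. The infinitesimal invertibility of $\cD_\cH$ over the set of $\cH$-free maps has already been established in the first part of Theorem~\ref{thm:genericHfree}. Hence the only point that remains to be supplied is the openness of the set of $\cH$-free maps in the $C^\infty$-fine (Whitney) topology.

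First I would take $U$ to be the collection of all $\cH$-free maps $f:M\to\Rq$ and argue that it is open. By definition, $f$ is $\cH$-free at $x$ precisely when, for some local trivialization $\{\xi_a\}$ of $\cH$ near $x$, the matrix $D_{\xi_1,\cdots,\xi_k,f}$ of~(\ref{eq:Dx}) attains its maximal rank $k+s_k$. The entries of this matrix are, up to the fixed coefficients coming from the $\xi_a$, linear combinations of the first and second partial derivatives of the $f^i$, so they depend continuously on the $2$-jet of $f$. Since maximal rank is an open condition on matrices (the non-maximal-rank locus $\cN_{s_k,q}$ is closed), $\cH$-freedom at $x$ persists under sufficiently small $C^2$-perturbations of $f$ near $x$. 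Globalizing this pointwise stability is exactly what the Whitney topology is designed to handle: choosing a locally finite cover of $M$ by trivializing charts and controlling the relevant $2$-jets uniformly on each chart shows that the condition ``$\cH$-free at every point of $M$'' defines an open set.

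With $U$ shown to be open, the proof concludes by invoking the two earlier results in sequence: Theorem~\ref{thm:genericHfree} yields infinitesimal invertibility of $\cD_\cH$ over $U$, and then Theorem~IFT asserts that the restriction $\cD_\cH|_U$ is an open map, which is precisely the claim.

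The main obstacle is the careful verification of openness in the Whitney topology, rather than the invocation of Theorem~IFT itself. The local statement---that maximal rank is stable under small perturbations of the $2$-jet---is routine, but passing from this local stability to openness of the global set requires some care, since the trivialization $\{\xi_a\}$ is defined only locally while the Whitney topology controls derivatives in a chart-dependent, locally finite manner. Here Proposition~\ref{thm:invrank} is essential: it guarantees that the rank of $D_{\xi_1,\cdots,\xi_k,f}$, and hence the property of being $\cH$-free, is independent of the chosen trivialization, so the locally defined open conditions patch consistently into a well-defined global open subset of $C^\infty(M,\Rq)$.
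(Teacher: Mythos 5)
Your proof is correct and takes essentially the same route as the paper, which obtains the statement as an immediate corollary of Theorem~\ref{thm:genericHfree} combined with Theorem~IFT. The only difference is that you explicitly verify the openness of the set of $\cH$-free maps in the Whitney topology (via stability of the maximal-rank condition on $2$-jets, patched globally using Proposition~\ref{thm:invrank}), a hypothesis of Theorem~IFT that the paper leaves implicit; your verification is sound.
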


\section{Construction of $\cH$-free maps for $q=k+s_k$}
\label{sec:hfree}
The purpose of  this section is to   give  proofs of  theorems
\ref{thm:bobtrick},  \ref{thm:CIS} and 
\ref{thm:MagnBra}.
For all three,  the argument of the proof stems from a
result obtained by   J.L. Weiner in an article ~\cite{Wei88}
where he studies the problem of existence of smooth first integrals
for a direction field on the plane.
Here we give  three different variants (quoted below as Lemma 
\ref{thm:W1},  Lemma \ref{thm:W2} and Lemma \ref{thm:W3})
of  Weiner's result which, in the original formulation,  reads as follows:
\begin{lemW}
\label{lem:Weiner}
Let $h$ be a smooth function on $\Rt$ without critical points and let
$\cH=\ker dh\subset T\bR^2$.
Then there exists a smooth  function 
$f:\Rt\rightarrow\bR$ whose level sets are transverse to $\cH$ at every point.
\end{lemW}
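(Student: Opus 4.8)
The plan is to turn the transversality condition into a single scalar inequality, and then to build the function by globalizing local data with the help of the plane's topology.

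First I would fix a nowhere-vanishing section $\xi$ of $\cH$; this exists because, by Kaplan's theorem \cite{Kap40}, every one-dimensional distribution on $\Rt$ is orientable. Since both $\Ker df$ and $\cH$ are lines, the level sets of $f$ are transverse to $\cH$ at a point precisely when $\xi$ fails to be tangent to them there, i.e. when $L_\xi f\neq 0$; equivalently $df\wedge dh\neq 0$, i.e. the map $(f,h):\Rt\to\Rt$ is a local diffeomorphism. So the whole statement reduces to producing a smooth $f$ with $L_\xi f\neq 0$ on all of $\Rt$. Observe that the set $\{f : L_\xi f>0\}$ is convex and the condition is open in the $C^1$-topology, so the only real issue is global, not local.

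Next I would record the structure of the leaves of $\cH$. Because $h$ has no critical points, no connected component of a level set $h^{-1}(c)$ can be compact: a compact component would be an embedded circle bounding a disk $D$, and $h|_{\partial D}\equiv c$ would force an interior extremum of $h$, hence a critical point, a contradiction. Thus every component of every level set is a properly embedded line, and the level curves form a regular family filling the plane. Invoking Kaplan's parallelization theorem \cite{Kap40}, this foliation is topologically trivial: it carries a globally defined transverse line field that admits a first integral. To pass from this to $f$ I would use simple connectivity of $\Rt$: it suffices to produce a nowhere-vanishing closed $1$-form $\beta$ with $\beta\wedge dh\neq 0$ everywhere, since then $\beta=df$ for some smooth $f$ by the Poincar\'e lemma ($H^1_{dR}(\Rt)=0$), and this $f$ is exactly the function we want. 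The transverse first integral coming from Kaplan's trivialization yields, after smoothing, such a $\beta$ (equivalently, a consistently oriented transverse coordinate), and transversality survives any $C^1$-small smoothing because it is an open condition.

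The hard part will be precisely this globalization. A naive patching of local solutions by a partition of unity $\{\rho_i\}$ fails: if $f_i$ are local functions with $L_\xi f_i>0$, then $L_\xi\big(\sum_i\rho_i f_i\big)=\sum_i\rho_i\,L_\xi f_i+\sum_i (L_\xi\rho_i)\,f_i$, and the second sum has uncontrolled sign, which can destroy positivity. What is genuinely required is to make the local transverse data coherent across the (possibly non-Hausdorff) leaf space of $\cH$, and it is exactly this coherence that Kaplan's structure theorem supplies; it is the technical heart of Weiner's argument \cite{Wei88}.
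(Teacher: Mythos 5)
Your reduction of transversality to the scalar condition $L_\xi f>0$ (equivalently $df\wedge dh\neq0$) is exactly the paper's starting point, and your diagnosis of why a naive partition of unity fails is correct. But the proposal has a genuine gap precisely where you locate ``the hard part'': the globalization is never carried out, and the tool you invoke for it does not exist in the form you need. Kaplan's theorem \cite{Kap40} gives orientability and a structure theory for regular curve families filling the plane, but it does \emph{not} assert that such a foliation is topologically trivial or that it carries a global transverse first integral; the paper's own example $g(x,y)=(y^2-1)e^x$ is a function without critical points whose level foliation has two separatrices and a non-Hausdorff leaf space, so there is no ``Kaplan trivialization'' to smooth. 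Moreover, producing a nowhere-vanishing closed $1$-form $\beta$ with $\beta\wedge dh\neq0$ is not a reduction of the problem but a restatement of it (on $\Rt$ any such $\beta$ is $df$ for the sought $f$), so the argument is circular at its core, and deferring the remaining coherence issue to ``the technical heart of Weiner's argument'' leaves the lemma unproved.

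For comparison, the paper's proof of its generalization (Lemma~\ref{thm:W1}) avoids both the partition of unity and any global trivialization. For each leaf one builds a \emph{globally defined} function $f_p:\Rt\to\bR$ which, in a flow-box chart $\psi_p(x,t)=\Phi^t_\xi(\phi(x))$ for a saturated tubular neighbourhood $U_p$, depends only on the flow parameter $t$, increases strictly for $|t|<1$ and is locally constant (equal to $0$ or $1$) elsewhere; the extension off $U_p$ uses the fact that a leaf separates the plane. Consequently $L_\xi f_p\geq0$ \emph{everywhere} and $L_\xi f_p>0$ on a strip, so the uncontrolled terms $(L_\xi\rho_i)f_i$ that you rightly worried about simply never arise. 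One then covers $\Rt$ by a locally finite family of such strips (this is where the separatrix/finite-type bookkeeping enters) and sums the corresponding functions with weights $2^{-N(k,i)}$ chosen so that the series of all derivatives converges: every summand contributes nonnegatively to $L_\xi f$ and every point lies in a strip where some summand contributes positively. That monotone-sum construction is the missing content of your proposal.
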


\subsection{One-dimensional distributions on ${\Rt}$}
\label{sec:1dim}
Here is our first generalization of the Lemma W above
where the hypothesis $\cH=\ker dh$ is weakened.

Consider the foliation $\cF$ of the integral leaves of $\cH$, that is $\cH=T\cF$,
and let $\pi:\Rt\to\cF$ be the canonical projection which associates to every point 
of $\Rt$ the leaf of $\cF$ passing through it.
A leaf $s$ of $\cF$ is said a {\em separatrix} if there is another
leaf $s'$ such that every saturated (with respect to $\pi$) neighbourhood of $s$
has non-empty intersection with every saturated neighbourhood of $s'$ (e.g. see Fig.~\ref{fig:p1}).
We call leaves $s$ and $s'$ in such a relation  {\em inseparable leaves} of $\cF$ and denote 
by $I_s$ the set of all leaves of $\cF$ which are inseparable from $s$. 
Finally, we say that a foliation $\cF$ is of {\em finite type} if the set $I_s$ is  
finite for every separatrix $s$ of $\cF$.


\begin{lemma}
 \label{thm:W1}
 Let $\cH$ be any 1-dimensional distribution on $\Rt$  of finite type.
 The following three (equivalent) properties hold true:
 \begin{enumerate}
 \item there exists a smooth  function $f:\Rt\rightarrow\bR$ whose level sets are
   transverse to $\cH$ at every point;
 \item for any regular 1-form   $\omega$
   such that $\cH=\ker\omega$ there 
   exists a smooth  function $f:\Rt\rightarrow\bR$ such that $*(\omega\wedge df)>0$;
 \item for any regular section  $\xi$  of $\cH$ there exists a smooth  function $f:\Rt\rightarrow\bR$
   such that  $L_\xi f>0$.
 \end{enumerate}
\end{lemma}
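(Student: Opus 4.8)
The plan is to reduce Lemma~\ref{thm:W1} to Weiner's original Lemma~W by first establishing the equivalence of the three properties and then producing the function $f$ required by property~(1). The three properties are tautologically linked: given a regular $\omega$ with $\cH=\ker\omega$ and any regular section $\xi$ of $\cH$, one has $*(\omega\wedge df)=\lambda\,L_\xi f$ for some nowhere-vanishing scalar $\lambda$ (both quantities measure the transversality of the level sets of $f$ to $\cH$), so the positivity conditions in~(2) and~(3) coincide up to sign, and replacing $f$ by $-f$ if necessary lets us fix the sign. Likewise, a level set of $f$ is transverse to $\cH$ at a point exactly when $df$ does not annihilate $\xi$ there, i.e. when $L_\xi f\neq 0$; so~(1) is the non-vanishing version of~(3), and a connectedness argument on $\Rt$ together with the sign flip upgrades non-vanishing to strict positivity. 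Hence it suffices to prove~(1).

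\textbf{Constructing the transverse first integral.}
For property~(1) the strategy is to pass to the leaf space. First I would recall (as the excerpt notes, citing~\cite{Kap40}) that every one-dimensional distribution on $\Rt$ is orientable, hence $\cH=\ker\omega$ for a regular $1$-form $\omega$ and $\cH=T\cF$ for its integral foliation $\cF$. A smooth function $f$ whose level sets are transverse to $\cH$ is the same as a smooth function that is strictly monotone along each leaf of $\cF$; equivalently, $f$ should descend to a function on the leaf space $\cF$ that separates nearby leaves. The leaf space of a nonsingular foliation of the plane is a (generally non-Hausdorff) one-dimensional manifold, and the hypothesis of \emph{finite type} is precisely what controls the failure of Hausdorffness: each separatrix $s$ has only finitely many leaves $I_s$ inseparable from it. I would build $f$ leaf by leaf over a Hausdorff quotient obtained by collapsing, or carefully separating, the finitely many inseparable bundles, then pull back and smooth.

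\textbf{The main obstacle.}
The hard part will be the non-Hausdorff leaf space: where two or more leaves are inseparable, a globally defined continuous function on $\cF$ cannot separate them, so a naive pull-back of a monotone coordinate fails exactly at the separatrices. This is where the finite-type hypothesis must do real work. I expect the resolution to be local: near each finite inseparable family $I_s$ one constructs, by hand, a smooth function transverse to $\cH$ on a saturated neighbourhood (using that there are only finitely many sheets to keep monotone simultaneously), arranges these local solutions to have compatible signs, and then patches them to a global transverse function via a partition of unity subordinate to a locally finite saturated cover. The delicate points are ensuring that the partition-of-unity combination does not create new critical directions (convex combinations of functions all satisfying $L_\xi f>0$ still satisfy $L_\xi f>0$, which is the reason the strict-positivity formulation~(3) is the convenient one to carry through the gluing) and verifying that the finite-type condition guarantees the cover can be taken locally finite with each patch meeting only finitely many inseparable leaves.
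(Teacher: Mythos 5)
Your reduction of the three properties to one another is fine, and the overall shape of your construction (local transverse functions on saturated neighbourhoods, globalized using the finite-type hypothesis to get a locally finite cover) matches the paper's. But the gluing step, which is the crux, has a genuine gap. You propose to patch the local solutions $f_j$ with a partition of unity $\{\rho_j\}$, justifying this by the remark that convex combinations of functions with $L_\xi f>0$ again satisfy $L_\xi f>0$. That remark is only true for combinations with \emph{constant} coefficients: for $f=\sum_j\rho_j f_j$ one has $L_\xi f=\sum_j\rho_j\,L_\xi f_j+\sum_j f_j\,L_\xi\rho_j$, and the second sum, involving the derivatives of the cutoffs along the leaves, has no sign and can dominate the first. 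This is exactly the difficulty the paper's proof is engineered to avoid: each local piece $f_p$ is a plateau function $l(t)$ of the leafwise flow parameter on a saturated tube $U_p$, equal to $0$ behind the tube and to $1$ ahead of it, and --- because a leaf separates $\Rt$ into two components --- it extends to a globally defined smooth function with $L_\xi f_p\geq 0$ \emph{everywhere} on $\Rt$ and $L_\xi f_p>0$ on a core strip. One then sums $\sum 2^{-N}f_p$ with constant positive weights over a countable family of such tubes whose cores cover $\Rt$ (local finiteness of this family is where finite type enters); strict positivity of $L_\xi f$ is then automatic and no partition of unity is needed. Your proposal would need either this device or a genuine argument controlling the $f_j\,L_\xi\rho_j$ terms.

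Two further points. Nothing special is, or needs to be, done ``by hand'' near an inseparable family: the local tube functions are identical everywhere, and the separatrices enter only through the combinatorics of choosing the covering family of transversals; your announced special local construction near each $I_s$ is left entirely unspecified. Also, your framing that $f$ ``should descend to a function on the leaf space of $\cF$'' is backwards: the desired $f$ is strictly monotone along the leaves of $\cF$, hence constant on no leaf, and does not descend to the leaf space of $\cF$ at all. This does not derail your concrete plan, but it conflates $\cF$ with a transverse foliation.
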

\begin{proof}
 Let $\cH^\perp$ be the distribution orthogonal to $\cH$ with respect to the Euclidean metric 
 on $\Rt$ and let $\cF$ and $\cG$ be  the foliations such that $T\cF=\cH$ and $T\cG=\cH^\perp$.
 By construction,  $\cF$ and $\cG$ are transversal at every point. Moreover,  they can be
 oriented in such a way that   the induced orientation on 
 $\cH_p\oplus\cH^\perp_p$ coincides with the orientation of $T_p\Rt$
 at every point $p\in\Rt$.

 Consider any point $p\in\Rt$ and let $h_p$ and $g_p$ be the leaves of $\cF$ and $\cG$
 passing through $p$. The set $U_p=\pi^{-1}(g_p)$ is a tubular neighbourhood of $h_p$.
 Let $\xi$ be the unitary (with respect to the Euclidean norm) section  
 of $\cH$ pointing in the positive direction, $\Phi^t_\xi$ 
 its flow and $\phi:\bR\to g_p$ a chart on $g_p$ such that $\phi(0)=p$ and compatible 
 with the orientation of $\cG$.
 It is easy to verify that the map $\psi_p:\Rt\to U_p$, defined as
 $\psi_p(x,t)=\Phi^t_\xi(\phi(x))$, is a chart for $U_p$ compatible with the orientation
 of $\Rt$ and that the leaves $h_p$ and $g_p$ correspond respectively, in this coordinate
 system, to the coordinate axes $x=0$ and $t=0$.

 Let now $\omega$ be a 1-form such that $\cH=\ker\omega$ (cfr. Example~\ref{ex:1distr})
 oriented positively with respect to $\cG$, i.e. such that $\omega(\zeta)>0$ if $\zeta$
 is a regular vector field tangent to $\cG$ and oriented positively.
 In the chart $\psi_p$ the vector field $\xi\in\ker\omega$ equals $\partial_t$ and so
 $\psi_p^*\omega = \alpha(x,t) dx$ ( i.e. $\omega$ has no $dt$ component) with $\alpha(x,t)>0$.

 Next, we define on  $U_p$ the smooth function $f_p:U_p\to\bR$ represented in coordinates 
 by $f_{\psi_p}(x,t):=\psi_p^*f_p(x,t)=l(t)$, where $l$ is any real smooth function 
 equal to 0 for $t\leq-1$, to 1 for $t\geq1$ and strictly increasing for $|t|<1$.
 Clearly, the support of $f_p$ is the set $V_p = \psi_p(\bR\times[-1,1])$.
 We have  
 $$
 \psi_p^*(\omega \wedge df_p)(x,t) =\alpha(x,t)\partial_t f_{\psi_p}(x,t)\,dx\wedge dt
 $$
 so that
 $\psi_p^*(*(\omega \wedge df_p))(x,t) = \alpha(x,t)f_{\psi_p}(x,t)\, *(dx\wedge dt)$ 
 is strictly positive inside  $\bR\times(-1,1)$ and constant elsewhere. 
 In particular, $df_p$ has compact support equal to $V_p$.

 Furthermore, every such function $f_p$ can be extended from $U_p$ to the whole $\Rt$. 
 Indeed, the leaf $h_p$ divides $\Rt$ in two disjoint components and the value of $f_p$ 
 on $h_p$ is strictly in  between 0 and 1. 
 By setting $f_p=0$ on all those points outside $V_p$ which belong to the component 
 where $f_p$ takes the value 0 and $f_p=1$ elsewhere, we get the desired extension  
 $f_p:\Rt\to\bR$. 

 Next, let us (arbitrarily) fix, for every separatrix $s_k\in\cF$, a leaf $g_k\in\cG$  
 so that $g_k\cap s_k\neq\emptyset$ (which implies  $g_k\cap s_{k'}=\emptyset$ for $k\neq k'$).
 Consider, inside every $U_k:=U_{p_k}$, the locally finite covering consisting of the sets 
 $C_{k,i}=\psi_k(\bR\times(i-1,i+1))$, $i\in\bZ$, where $\psi_k:=\psi_{p_k}$. 
 The set  $C=\{C_{k,i}, k\in\bN, i\in\bZ\}$ obviously  covers the whole $\Rt$
 and, since the foliation  $\cF$ is of finite type, for every  $k\in\bN$ there is  a finite number 
 of $k'\in\bN$ such that $U_k\cap U_{k'}\neq\emptyset$. Therefore, $C$ is
 locally finite. We denote by  $p_{k,i}$ the points $\psi_k(0,i)$ and set
 $f_{k,i}=f_{p_{k,i}}$. The series
 $$
 \sum_{k=1}^\infty\sum_{i=-\infty}^\infty 2^{-N(k,i)} f_{k,i}
 $$ 
 (here $N:\bN\times\bZ\rightarrow\bN$ indicates any bijection) converges pointwise 
 to a smooth function $f$. Indeed the $f_{k,i}$ are uniformly bounded 
 and the series $\sum_{k=1}^\infty\sum_{i=-\infty}^\infty D_\alpha f_{k,i}$ of the 
 derivatives of all positive orders is a finite sum since all $D_\alpha f_{k,i}$ 
 have support equal to $C_{k,i}$.

 Finally, we have 
 $$
 *(\omega\wedge df) = \sum_{k=0}^\infty\sum_{i=-\infty}^\infty 2^{-N(k,i)} [ *( \omega \wedge df_{k,i}) ]>0
 $$
 since every point of  $\Rt$ belongs to some $C_{k,i}$, where 
 $*(\omega\wedge df_{k,i})>0$. 
\end{proof}
\vskip 0.1cm

\noindent
{\bf Proof of Theorem \ref{thm:bobtrick}.}
Let $\xi$ be a regular section of the distribution $\cH\subset T\Rt$, $\cH=\ker\omega$ 
such that $i_\xi(*\omega)=1$ and let $U=L_\xi^{-1} \left(\smpos \right)$ denote the set 
of all smooth real valued maps $f$ on ${\real}^2$ such that $L_\xi f>0$ 
(this set is non-empty by the previous lemma). We want to show that, for every free map 
$\psi: \bR\rightarrow \Rt$, the map 
$$F(x,y)=\psi(f(x,y))$$ 
is $\cH$-free.
Take $\psi(t)=(a(t),b(t))$. 
We must verify  that the matrix
$$\left(
\begin{array}{cc}
L_\xi[a(f)]&L_\xi[b(f)]\cr
\noalign{\medskip}
L^2_\xi[a(f)]&L^2_\xi[b(f)]\cr
\end{array}
\right)$$
has rank $2$
(cfr. Example \ref{ex:1distr}).

\par\noindent
A direct computation shows that 
$$\det D_{\xi,F} =\left|
\begin{array}{cc}
L_\xi[a(f)]&L_\xi[b(f)]\cr
\noalign{\medskip}
L^2_\xi[a(f)]&L^2_\xi[b(f)]\cr
\end{array}
\right|
=
$$
\smallskip
$$
=
\left|
\begin{array}{cc}
a'(f)L_\xi f&b'(f)L_\xi f\cr
\noalign{\medskip}
a'(f)L^2_\xi f+a''(f)[L_\xi f]^2&b'(f)L^2_\xi f+b''(f)[L_\xi f]^2\cr
\end{array}
\right|
=
$$
\smallskip
$$
=
[a'(f)b''(f)-b'(f)a''(f)][L_\xi f]^3 \neq 0
$$
which, by hypothesis,   is never zero. In fact   we assumed $L_\xi f>0$ and the first factor cannot  
vanish  since $\psi$ is free.
\qed
\begin{figure}
\begin{center}
\includegraphics[width=5cm]{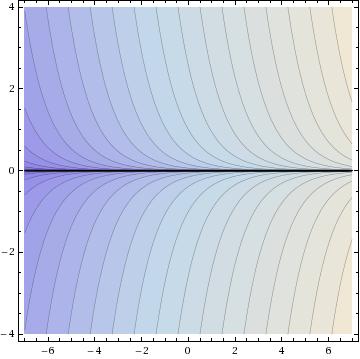}\hskip.75cm\includegraphics[width=5cm]{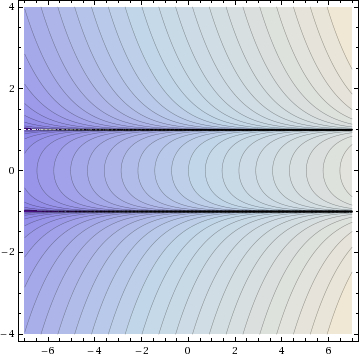}
\end{center}
\caption{%
  \small
  Level sets of the 1-st degree quasi-polynomial regular function $f(x,y)=y e^x$ (left).
  A transversal 2-nd degree quasi-polynomial function $g(x,y)=(y^2-1)e^x$ (right).
  The foliation given by   the level sets of $f$ admits a global transversal (e.g., any 
  vertical straight line).  The foliation given by  the level sets of $g$ does not,  since it admits 
  two separatrices ($y=\pm1$) delimiting a stripe filled by leaves of the kind
  $x=\sec y$.
}
\label{fig:p1}
\end{figure}
\begin{example}\rm
 Consider the distribution $\cH\subset \Rt$ defined by   
 $$
 \cH=\span\{\xi=2y\partial_x+(1-y^2)\partial_y\}\,.
 $$
 This $\cH$ is of finite type since it has only a pair of separatrices, 
 the straight lines $y=\pm1$.
 Indeed $\cH$ is the tangent space to the (Hamiltonian) 
 foliation $\cF$ of the level sets of $f(x,y)=(y^2-1)e^x$
 (a direct computation shows that $L_\xi ((y^2-1)e^x) = 0$).
Moreover, $L_\xi (ye^x) = (1+y^2)e^x>0$,
i.e. the foliation of the level sets of the function $g(x,y)=(1+y^2)e^x$ 
is transverse  to $\cF$ at every point (cfr. Fig.~\ref{fig:p1}).
Then, by Theorem~\ref{thm:bobtrick}, $F(x,y)=\psi(g(x,y))$ is $\cH$-free for 
every free map $\psi:\bR\rightarrow\Rt$.
For example, $F(x,y)=(ye^x,e^{ye^x})$ is $\cH$-free (cfr. Example~\ref{exfree}).
\end{example}
\subsection{The case of completely integrable systems}
\label{sec:CIS}
We can prove another generalization (Lemma~\ref{thm:W2} below)
of Weiner's Lemma in terms of completely integrable systems.


Let $(M^{2n},\Omega)$ be a connected symplectic manifold. 
Since the symplectic 2-form is non-degenerate
it sets up a linear  isomorphism between vector fields $\xi$ and 1-forms 
$\omega$ on $M$ through the relation $i_\xi\Omega=\omega$. 
Moreover, every real valued  function $f:M\rightarrow\bR$ determines 
a unique vector field $\xi_f$ called {\em Hamiltonian vector field}
with the {\em Hamiltonian} $f$ by requiring that for every vector field $\eta$
on $M$ the identity $df(\eta)=\omega (\xi_f, \xi_\eta)$ must hold.
To the given symplectic structure $\Omega$ we can associate,   in a natural way, 
the {\em Poisson bracket} via the formula $\{f, g\}=\Omega (\xi_f, \xi_\eta)$
which turns the algebra $C^{\infty}(M)$ of smooth functions on $M$ into a Poisson algebra.
Assume that $(M^{2n},\Omega)$ admits a regular completely integrable
system. This means that there exists a maximal set of functionally independent Poisson
commuting functions $\{I_i\}$, i.e., such that  $dI_1\wedge\cdots\wedge dI_n\neq0$ at every point of 
$M$ and that the Poisson subalgebra generated by the $I_i$ in $C^\infty(M)$ is abelian.
Consider the distribution $\cH =\ker dI_1\cap\cdots\cap \ker dI_n$ and  
the corresponding Lagrangian foliation  $\cF$ (so that $\cH=T\cF$). 
Then the following theorem, classically known as {\em Arnold--Liouville theorem} holds true
(see~\cite{Arn78} or~\cite{AM78} for details).
%
\begin{theorem}[Arnold--Liouville]
Let $\cF$ the Lagrangian foliation defined above.
If every Hamiltonian vector field $\xi_{I_i}$ is complete then every leaf of $\cF$ 
is diffeomorphic to $\bT^r\times\bR^{n-r}$ and has a saturated neighbourhood $U$ (with
respect to the projection onto the space of leaves $\cF$)
symplectomorphic to the product manifold $D\times(\bT^r\times\bR^{n-r})$, where $D\subset\bR^n$ 
is open, endowed with the coordinates $(I_i,\varphi^j)$ and with the canonical 
symplectic form $\Omega_0=dI_i\wedge d\varphi^i$. 
\end{theorem}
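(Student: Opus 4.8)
The plan is to exploit the commuting Hamiltonian flows to realize each leaf as an orbit of an $\bR^n$-action, read off its diffeomorphism type from the isotropy subgroup, and then normalize the symplectic form on a saturated neighbourhood.

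First I would record that the Hamiltonian vector fields $\xi_{I_1},\dots,\xi_{I_n}$ are everywhere tangent to $\cF$ and span $\cH$ pointwise: each $I_j$ is constant along the flow of $\xi_{I_i}$ because $\xi_{I_i}(I_j)=\pm\{I_i,I_j\}=0$, so these fields are tangent to the common level sets, while their linear independence is inherited from $dI_1\wedge\cdots\wedge dI_n\neq0$ via the non-degeneracy of $\Omega$. Since the $I_i$ Poisson-commute, $[\xi_{I_i},\xi_{I_j}]=\pm\xi_{\{I_i,I_j\}}=0$, so the flows commute; completeness then yields a well-defined smooth action $\Phi:\bR^n\times M\to M$, $\Phi^t=\Phi^{t_1}_{\xi_{I_1}}\circ\cdots\circ\Phi^{t_n}_{\xi_{I_n}}$, preserving every leaf and locally free (its orbits are open in the leaves, as the infinitesimal generators span the $n$-dimensional $\cH$). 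As each leaf is connected, the action is transitive on it, identifying the leaf with the homogeneous space $\bR^n/\Gamma$, where $\Gamma$ is the isotropy subgroup (the ``period lattice'') at any of its points.

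Next I would analyse $\Gamma$. Local freeness forces $\Gamma$ to be a discrete subgroup of $\bR^n$, hence a lattice $\Gamma\cong\bZ^r$ of some rank $0\le r\le n$; consequently each leaf is diffeomorphic to $\bR^n/\bZ^r\cong\bT^r\times\bR^{n-r}$, which is the first assertion. For the neighbourhood statement I would use that $I=(I_1,\dots,I_n)\colon M\to\bR^n$ is a submersion whose fibres are unions of leaves, so a saturated neighbourhood of a fixed leaf may be taken of the form $U=I^{-1}(D)$ with $D\subset\bR^n$ a small open set. Choosing a smooth section of $I$ over $D$ and pushing it around by $\Phi$ trivializes $U$ as a bundle of orbits; after shrinking $D$ so that the period lattice $I\mapsto\Gamma_I$ admits a smooth frame $e_1(I),\dots,e_r(I)$, a fibrewise linear change of the flow-time coordinates carrying each $\Gamma_I$ to the standard lattice produces angle coordinates $\varphi^1,\dots,\varphi^n$ and a diffeomorphism $U\cong D\times(\bT^r\times\bR^{n-r})$.

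The hard part will be the symplectic normalization, i.e. arranging $\Omega=dI_i\wedge d\varphi^i$ rather than a mere diffeomorphism. Since the leaves are Lagrangian, $\Omega$ annihilates the fibre directions, so in the coordinates $(I,\varphi)$ one has $\Omega=\sum b_{ij}\,dI_i\wedge d\varphi^j+\sum c_{ij}\,dI_i\wedge dI_j$, and closedness of $\Omega$ constrains the coefficients, in particular forcing the $b_{ij}$ to be independent of $\varphi$. The remaining work is to kill the $dI\wedge dI$ term and reduce $(b_{ij})$ to the identity: this is exactly the step that replaces the first integrals $I_i$ by genuine action variables $\tilde I_i=\frac{1}{2\pi}\oint_{\gamma_i}\lambda$, the period integrals of a local primitive $\lambda$ of $\Omega$ along the basic cycles $\gamma_i$ of the torus factor, together with a compatible redefinition of the section fixing $\varphi=0$, so that $\lambda-\sum\tilde I_i\,d\varphi^i$ becomes closed and the form attains the canonical shape $\Omega_0=d\tilde I_i\wedge d\varphi^i$. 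I expect the topological portion (the first two paragraphs) to be routine, and this last normalization — checking that the period integrals are well defined, independent of the choice of $\lambda$ and of the cycles up to the lattice, and that they furnish the transverse coordinates — to be where the genuine content lies.
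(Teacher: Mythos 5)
The paper does not prove this statement at all: it is quoted as the classical Arnold--Liouville theorem and the reader is sent to \cite{Arn78} and \cite{AM78}, so there is no in-paper argument to compare against. Your outline is precisely the standard proof from those references (commuting complete Hamiltonian flows give a locally free $\bR^n$-action, leaves are orbits $\bR^n/\Gamma$ with $\Gamma$ a discrete subgroup, hence $\bT^r\times\bR^{n-r}$, then action--angle normalization), and it is correct as a plan. Two places where your sketch is genuinely thin, beyond the normalization step you already flag: (i) the statement requires the rank $r$ of the period lattice and the lattice itself to vary smoothly (in particular to be locally constant) over the base $D$, which does not follow merely from discreteness of each $\Gamma_I$ --- one needs an implicit-function-theorem argument showing that periods persist and that no new independent periods can accumulate on nearby leaves, and this is exactly what lets you choose the smooth frame $e_1(I),\dots,e_r(I)$; (ii) when $r<n$ the period integrals $\oint_{\gamma_i}\lambda$ only produce $r$ candidate action variables, so the remaining $n-r$ transverse coordinates must be built differently (e.g. from flow times relative to a Lagrangian section), which is why the non-compact version of the theorem needs a separate argument from the textbook compact case $r=n$. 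Neither issue is fatal --- both are handled in the cited sources --- but a complete proof must address them explicitly.
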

This statement means, in particular, that the commutation relations between the special coordinates
$(I_i,\varphi^j)$ are given by the well-known
$$
\{I_i,I_j\}=0\,,\;\;\{\varphi^i,\varphi^j\}=0\,,\;\;\{I_i,\varphi^j\}=\delta_i^j.
$$
The  $(I_i,\varphi^j)$ are usually called ``action-angle'' coordinates.

When $M=\Rt$  every Hamiltonian system (represented by a single Hamiltonian) is,  
trivially, a completely integrable system. In particular, Lemma~W can be restated as follows:

\vskip 0.3cm

{\em Let  $\{I\}$ be a regular completely integrable system on the symplectic manifold 
$(\Rt,\Omega_0=dx\wedge dy)$. Then there exists a smooth function $f:\Rt\rightarrow\bR$ 
such that  $\Omega_0(\xi_I,\xi_f)>0$ for all points of $\Rt$.}

\vskip 0.3cm

We come now to prove,
based on what we have seen above,
the following Lemma:
\begin{lemma}
\label{thm:W2}
Let $\{I_1,\cdots,I_n\}$ be a regular completely integrable system on $(M^{2n},\Omega)$
and suppose that all the Hamiltonian vector fields $\xi_{I_i}$ are complete. Then there exist $n$ smooth 
functions $\{f^1,\cdots,f^n\}$ (possibly multi-valued) such that 
\begin{equation}
  \label{eq:commRel}
  \{I_i,f^i\}>0\,,\;\;\{I_i,f^j\}=0\,,\;j\neq i
\end{equation}
on the whole manifold $M^{2n}$.
\end{lemma}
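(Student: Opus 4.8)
The plan is to construct each function $f^j$ separately, since the conditions (\ref{eq:commRel}) decouple in the index $j$: for a fixed $j$ one must produce a (possibly multi-valued) smooth $f^j$ that Poisson-commutes with every $I_i$ with $i\neq j$, while $\{I_j,f^j\}>0$. Since $\{I_i,f^j\}$ equals, up to sign, the Lie derivative $L_{\xi_{I_i}}f^j$ of $f^j$ along the Hamiltonian flow of $I_i$, these requirements say geometrically that $f^j$ must be invariant under the flows of all $\xi_{I_i}$ with $i\neq j$ and strictly monotone along the flow of $\xi_{I_j}$ (the sign being fixable by replacing $f^j$ with $-f^j$).

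First I would invoke the Arnold--Liouville theorem quoted above: completeness of the $\xi_{I_i}$ yields, around every leaf, a saturated neighbourhood $U\cong D\times(\bT^r\times\bR^{n-r})$ carrying action-angle coordinates $(I_i,\varphi^i)$ with $\Omega=dI_i\wedge d\varphi^i$, whence $\{I_i,\varphi^k\}=\delta_i^k$. Thus on each such $U$ the local angle $\varphi^j$ is already an exact local solution, satisfying $\{I_j,\varphi^j\}=1$ and $\{I_i,\varphi^j\}=0$ for $i\neq j$; its multivaluedness along the torus factors is precisely the source of the ``possibly multi-valued'' clause in the statement (equivalently, the intrinsically defined object is the single-valued closed $1$-form $d\varphi^j$).

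The remaining task, which I expect to be the main obstacle, is to patch these local solutions into a global $f^j$ while preserving both the invariance $\{I_i,f^j\}=0$ for $i\neq j$ and the strict positivity $\{I_j,f^j\}>0$. I would mimic the globalization in the proof of Lemma~\ref{thm:W1}: from the action-angle model I build, on each saturated chart, a function that is a monotone ``profile'' in the $\varphi^j$-direction only, hence of the form $l(\varphi^j)$ (modulated by the actions), so that $\{I_i,l(\varphi^j)\}=l'(\varphi^j)\delta_i^j$ vanishes for $i\neq j$ and is non-negative for $i=j$; each such profile is extended off its support by locally constant values, and the global function is then assembled as a weighted series $\sum_\alpha 2^{-N(\alpha)}f^j_\alpha$ over a locally finite saturated cover. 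Because the weights are \emph{constants} and each term is built to be invariant under the $\xi_{I_i}$ with $i\neq j$, the equalities $\{I_i,f^j\}=0$ survive the summation term by term by bilinearity of the bracket, while $\{I_j,f^j\}=\sum_\alpha 2^{-N(\alpha)}\{I_j,f^j_\alpha\}>0$ at every point, since each point lies in some chart where the corresponding profile is strictly increasing.

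The delicate points are exactly those that made Lemma~\ref{thm:W1} nontrivial, now transplanted to the possibly non-Hausdorff leaf space of the Lagrangian foliation $\cF$: one must choose the saturated cover and the index function $N(\alpha)$ so that the series is locally finite and converges in $C^\infty$, and one must arrange the extension-by-constants of each profile past its supporting chart so that it remains smooth and globally invariant under the $\xi_{I_i}$, $i\neq j$ (in the planar case of Lemma~\ref{thm:W1} this was guaranteed by a single leaf separating $\Rt$, a feature one cannot take for granted here and must replace by a careful use of the action coordinates and the completeness hypothesis). Granting these, the resulting $f^1,\dots,f^n$ satisfy (\ref{eq:commRel}) on all of $M^{2n}$.
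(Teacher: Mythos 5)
Your proposal follows essentially the same route as the paper's proof: Arnold--Liouville action--angle charts furnish local solutions of the form $l(\varphi^j)$ modulated by bump functions in (renormalized) action variables, which are then summed over a countable saturated cover with constant weights chosen to force $C^\infty$ convergence, the $i\neq j$ brackets vanishing term by term and positivity holding because every point lies in the strictly increasing region of some term. The ``delicate points'' you flag are resolved in the paper exactly as you anticipate (renormalized actions $J_i^l$, a bump $b$ and profile $l$, and explicit derivative bounds giving the weights $a_k$), and the multivaluedness over compact leaf components is attributed there, as you say, to the non-exactness of the closed forms $d\varphi^j$ on the torus factors.
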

\begin{proof}
We follow  closely the original argument in \cite{Wei88}.
By Arnold-Liouville Theorem, every leaf $l\in\cF$ has a saturated neighbourhood 
$U_l\simeq \bR^n\times\bR^k\times\bT^{n-k}$ with coordinates $(I_i,\varphi^j_l)$
such that $U_l$ is defined by the inequalities $\alpha_i^{l}\leq I_i\leq \beta_i^{l}$ 
and
$$
\{I_i,\varphi^j_l\}=\delta^j_i.
$$

We renormalize the action coordinates $I_i$ (which, by hypothesis, are global)
by $J_i^{l}=\mu^{l}_i(I_i-\nu^{l}_i)$ so that $U_l$ is  characterized as
the connected component of $|J_i^{l}|<2$ containing $l$. 
Now, let $b:\bR\to\bR$ be any bump function with support equal to $(-1,1)$ and let  
$l:\bR\to\bR$ be any smooth non-decreasing function which is equal to 0 on 
$(-\infty,-1]$, to 1 on $[1,\infty)$ and strictly increasing between 0 and 1 on $(-1,1)$.

The functions defined on $U_l$ as

$$
f^j_l=b(J^l_1)\cdots b(J^l_n)l(\varphi^j_l)
$$
can be trivially extended to the whole $M$ by setting $f^j_l=0$  outside $U_l$. 
Note that their differentials 
$$
df^i_{l}= \sum_{i=1}^\infty \mu^{l}_i b'(J_i^{l}) dI_i + \sum_{i=1}^\infty b(J_n^{l})l'(\varphi^i_l)d\varphi^i_l
$$
have (modulo the span of the $dI_i$) compact support 
$$V_l=\{p\in U_l|\, |J_i^{l}(p)|<1\,,\;|\varphi^j_l(p)|<1\}\,.$$
Moreover, we have that
$$
\{I_i,f^i_l\}=b(J_1^{l})\cdots b(J_n^{l})l'(\varphi^i_l)>0\,,\;\;\{I_i,f^j_l\}=0\,,\;\;j\neq i
$$
inside $V_l$ while all Poisson brackets are identically zero outside $V_l$.

We extract from the covering $\{U_l\}$ a countable subcovering $\{U_{l_k}\}$.
and show that, by a convenient choice of the coefficients $a_k$, the series
$$f^i=\sum_{k\in\bN}a_k f^i_{l_k}$$
can be made convergent.
In fact the $f^i_{l_k}$ are uniformly bounded so that, by taking $a_k=2^{-k}$,  
the series can be made uniformly convergent. Next,  let us fix any $n$-dimensional 
distribution $\cH'$ transverse to $\cF$ and consider on $M$ the Riemannian metric 
$g=\sum_{i=1}^{n}(dI_i)^2+g'$ (where $g'$ is any metric on $\cH'$). Denote by
$\|D^{(j)}f^i_{l_k}\|$ the norm (associated to the metric $g$) of the derivatives 
of order $j$ of $f^i_{l_k}$. This is seen as a map with domain $M$ and range  the symmetric product 
bundle (of order $j$) $S^jM$ based on $M$. We thus get that, for every value of $k$, 
there is some finite constant $M'_k$ such that, outside $V_{l_k}$, 
$\|D^{(j)}f^i_{l_k}\|\leq M'_{k,i}$ for $1\leq j\leq k$.
Since $V_{l_k}$ has compact closure, there exists another constant $M_{k,i}''$ such 
that $\|D^{(j)}f^i_{l_k}\|\leq M'_{k,i}$ within $V_{l_k}$. This means that 
$\|M^{-1}_{k,i} D^{(j)}f^i_{l_k}\|\leq 1$ for $M_{k,i}=\min\{1,M'_{k,i},M''_{k,i}\}$. 
Therefore, if we take $a_k=2^{-k}M^{-1}_{k,i}$, the series $\sum_{k\in\bN}a_k D^{(j)}f^i_{l_k}$ 
uniformly converges for each $j\in\bN$.

Then  the $f^i$ are smooth and one has
$$
\{I_i,f^i\}>0\,,\;\;\{I_i,f^j\}=0\,,\;\;j\neq i.
$$

Finally, Arnold--Liouville's Theorem tells that the neighbourhoods $U_l$ are all symplectomorphic 
to $\bR^n\times(\bT^r\times\bR^{n-r})$ for some $r$ between $0$ and $n$ and, 
for $r>0$ the leaves have compact components. Observe that, 
on these components,  the $df^j$ are well-defined closed 1-forms. Nevertheless,  these forms
may be non-exact,  due  to the
non-triviality of the first cohomology group of the leaves. Consequently,
in this case the functions $f^j$ may be multivalued, namely   well-defined
only on some covering of $M$.
\end{proof}
\par\noindent
{\bf Proof of Theorem~\ref{thm:CIS}}
We can apply  Lemma \ref{thm:W2} to see that  there exist $n$ smooth functions
$f^i$  satisfying~(\ref{eq:commRel}) which, 
since the leaves of the foliation $\cF$ have no compact component, 
are all single-valued. 
We consider  $n$ free maps  $\{\psi_1,\cdots,\psi_n\}$  from $\bR$ 
to $\Rt$ and prove that the map $F:M\to\bR^{n+s_n}$ defined as
$$
F(x) = (\psi_1(f^1(x)),\cdots,\psi_n(f^n(x)),f^1(x)f^2(x),\cdots,f^{n-1}(x)f^n(x))
$$
is $\cH$-free.

Let $\psi_i(t)=(a_i(t),b_i(t))$ and set $D\psi_i=a_i'b_i''-a_i''b_i'$. The 
square matrix $D_{\xi_1,\cdots,\xi_n,F}$ (see (\ref{eq:Dx}) above) is given, up to 
a permutation of its rows, by
$$
\def\ff#1{g_{#1}}
\left(
\begin{array}{ccccccccccc}
  A_1&\vrule&     *&\vrule&  *&\vrule&        *&\vrule&     *&\vrule&*\cr
  \noalign{\hrule}
    0&\vrule&\ddots&\vrule&  *&\vrule&        *&\vrule&     *&\vrule&*\cr
  \noalign{\hrule}
    0&\vrule&     0&\vrule&A_n&\vrule&        *&\vrule&     *&\vrule&*\cr
  \noalign{\hrule}
    0&\vrule&     0&\vrule&  0&\vrule&2\ff1\ff2&\vrule&     *&\vrule&*\cr
  \noalign{\hrule}
    0&\vrule&     0&\vrule&  0&\vrule&        0&\vrule&\ddots&\vrule&*\cr
\noalign{\hrule}
    0&\vrule&     0&\vrule&  0&\vrule&        0&\vrule&     0&\vrule&2\ff{n-1}\ff{n}\cr
\end{array}
\right)
$$
where $g_i=L_{\xi_{I_i}}f^i$,
$$
A_i=
\begin{pmatrix}
a'_i(f^i)g_i&b'_i(f^i)g_i\cr
\noalign{\medskip}
a'_i(f^i)L^2_{\xi_{I_i}} f^i+a''_i(f^i)g_i^2&b'_i(f^i)L^2_{\xi_{I_i}} f^i+b''_i(f^i)g_i^2\cr
\end{pmatrix}
$$
and the stars represent terms which do not contribute to the determinant.

Since $\det A_i=g_i^3D\psi_i$ and the blocks below the diagonal are identically zero, 
the determinant of $D_{\xi_1,\cdots,\xi_n,F}$   equals  
$$
2^{s_n}\Pi_{k=1}^n(g_i^{n+2}D\psi_k)
$$
which differs from zero at every point because, by construction,   $g_i>0$  and,  
by hypothesis,  $D\psi_i\neq0$. Hence $F$ is a $\cH$-free map.
\qed
\begin{remark}\rm
 Clearly the map $F$ defined in the proof above is modeled after the canonical 
 free map $G:\bR^n\to\bR^{n+s_n}$ given by
 $$
 G(x^1,\dots,x^n)=(x^1,\dots,x^n,(x^1)^2,x^1x^2,\cdots,(x^n)^2)\;.
 $$

 So far it is not known (see~\cite{Gro86}, p.9) whether, for $n\geq2$, there exist 
 free maps from $\bT^n$ to $\bR^{n+s_n}$.
 It is for this reason that in Theorem~\ref{thm:CIS} we require that the leaves 
 of the foliation $\cF$ have no compact component.
 On the other hand, it is an easy matter to check that the map 
 $G:\bT^n\rightarrow \bR^{n+s_n+s_{n-1}}$ defined by
 $$
 G(\theta^1,\dots,\theta^n)
 =(\cs(\theta^1),\dots,\cs(\theta^n),\cs(\theta^1+\theta^2),\dots,\cs(\theta^{n-1}+\theta^n))\;,
 $$
 where $\cs\theta=(\cos\theta,\sin\theta)$, is free.
 \end{remark}
\subsection{The case of Poisson systems}
\label{sec:MagnBra}
Poisson structures are a generalization of symplectic structures having the
nice property of existing even in odd-dimensional manifolds. 
Recall that a Poisson
manifold is a pair $(M,\{,\})$ where $\{,\}:C^\infty(M)\times C^\infty(M)\to C^\infty(M)$ is a 
$\bR$-bilinear skew-symmetric derivation satisfying the Jacobi identity. To every smooth 
function $f\in C^\infty (M)$ it is associated canonically a  {\em Hamiltonian vector field}
$\xi_f$ defined by $\xi_f(g)\stackrel{\rm def}{=}\{f,g\}$.

In particular, when $M=\Rt$, the canonical symplectic form $\Omega_0=dx \wedge dy$ 
induces on $M$ a Poisson Bracket $\{f,g\}=\Omega_0(\xi_f,\xi_g)$ which  can  also be 
obtained via the Euclidean metric as
$$
\{f,g\}=*[df\wedge dg]
$$
where $*$ is the Euclidean Hodge operator. Observe that  this Poisson bracket 
does not need a symplectic structure to be defined but rather an 
orientable Riemannian structure. 
Furthermore,  it can be defined in any dimension
$n$ as follows.
Let $M$ be an oriented Riemannian manifold of dimension $n\geq2$, $*$ its Hodge operator 
and $H=\{h_1,\cdots,h_{n-2}\}$ a set of $n-2$ smooth functions. We set  
$$
\{f,g\}_H\stackrel{\rm def}{=}*[dh_1\wedge\cdots\wedge dh_{n-2}\wedge df\wedge dg]
$$
and call it {\sl Riemann-Poisson} bracket with respect to $H$.
In particular, the foliation corresponding to a Hamiltonian 
vector field $\xi_h$, with $h\in C^\infty(M)$, is given by the intersections of the 
level sets of the $h_i$ with the level sets of $h$.

In the general case each function $h_i$ is what, in the language of Poisson's system,  
is called a Casimir for $\{,\}_H$, meaning that it is an  element of the 
center of $C^\infty(M)$.
\begin{example}\rm
 Let $M=\bR^3$ with the Euclidean metric and coordinates $(x,y,z)$ and let $H=\{x\}$. 
 Then the Riemann-Poisson bracket is given by 
 $\{f,g\}_H = \partial_y f\,\partial_z g - \partial_y g\,\partial_z f$. In particular
 $\xi_y=\partial_z$ and $\xi_z=-\partial_y$ and the coordinate $x$ is a Casimir. 
\end{example}
%
\begin{remark}\rm
 Another example, considered by S.P. Novikov in~\cite{Nov82}, is given when $M$ is the 
 three-torus $\bT^3$ with angular coordinates $(\theta^1,\theta^2,\theta^3)$ and 
 $H=\{h(\theta^i)=B_i\theta^i\}$, $i=1,2,3$, for some constant 1-form $B=B_i d\theta^i$.
 Then the Riemann-Poisson bracket is given by 
 $$
 \{f,g\}_H = \epsilon^{ijk}\partial_i f\,\partial_i g\, B_k\;,
 $$
 where $\epsilon^{ijk}$ is the totally antisymmetric Levi--Civita tensor 
 (for an example of the rich topological structure hidden behind this Riemann-Poisson 
 bracket, the interested reader is referred to~\cite{DD09}).
\end{remark}
Placed in this setting Weiner's Lemma reads as follows:
\vskip 0.3cm
{\em 
Consider the Euclidean plane $\Rt$ endowed with the Riemann-Poisson bracket $\{,\}$ and let  
$h\in C^\infty (\Rt)$ be a regular Hamiltonian. Then there exists $f\in C^\infty (\Rt)$ 
such that $\{h,f\}>0$ on the whole $\Rt$.
}
\vskip 0.3cm
Furthermore,  Weiner's result 
in the latter formulation
can be extended, under a non-degeneracy 
condition,  to  Riemann-Poisson brackets:
\begin{lemma}
\label{thm:W3}  
Let $M$ be an oriented Riemannian manifold of dimension $n\geq 2$ and 
let $H=\{h_1,\cdots,h_{n-2}\}$ be a set of $n-2$ functions functionally 
independent at every point
(i.e. such that $dh_1\wedge\cdots\wedge dh_{n-2}$ never vanishes).
Then, for any $h\in C^\infty(M)$ functionally independent from the $h_i$, 
there exists a smooth function (possibly multivalued) $f:M\to\bR$ such that 
the Riemann--Poisson bracket $\{h,f\}_H$ is strictly positive at every point.
\end{lemma}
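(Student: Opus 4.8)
The plan is to reduce the statement to a dynamical assertion about the Hamiltonian field $\xi_h$ and then to construct $f$ by a local-to-global patching modelled on the proof of Lemma~\ref{thm:W2}. First I would record that $\{h,f\}_H=\xi_h(f)=L_{\xi_h}f$, so that the goal is a (possibly multivalued) smooth $f$ with $L_{\xi_h}f>0$ everywhere. The field $\xi_h$ never vanishes: near any point pick $g$ whose differential completes the independent system $dh_1,\dots,dh_{n-2},dh$ to a coframe; then $dh_1\wedge\cdots\wedge dh_{n-2}\wedge dh\wedge dg\neq0$, so $\xi_h(g)\neq0$. Since $L_{\xi_h}h_i=\{h,h_i\}_H=0$ and $L_{\xi_h}h=0$, the $h_i$ and $h$ are first integrals, and the integral curves of $\xi_h$ are exactly the connected components of the common level sets of $h_1,\dots,h_{n-2},h$, i.e.\ the one-dimensional leaves of $\cF$.

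Next I would isolate the two-dimensional picture that makes this a fibered version of Weiner's planar lemma. The Casimirs alone foliate $M$ into the $2$-dimensional leaves $S_c=\{h_1=c_1,\dots,h_{n-2}=c_{n-2}\}$, and $\xi_h$ is tangent to each $S_c$. Computing the Hodge star in an orthonormal coframe adapted to $TS_c$ and its orthogonal complement $\span\{\nabla h_1,\dots,\nabla h_{n-2}\}$ gives, for every $g$,
$$\{h,g\}_H\big|_{S_c}=A\,\{h|_{S_c},g|_{S_c}\}_2,\qquad A=\sqrt{\det(\langle\nabla h_i,\nabla h_j\rangle)}>0,$$
where $\{\,,\}_2$ is the two-dimensional Riemann--Poisson bracket of $S_c$ with its induced metric and orientation. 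Moreover $h|_{S_c}$ has no critical point, its differential being the nonzero restriction of $dh$ to $TS_c$. Hence along each leaf $S_c$ the field $\xi_h$ is a strictly positive multiple of the planar Hamiltonian field of $h|_{S_c}$, and the function we seek is one transverse to the level curves of $h|_{S_c}$ -- precisely the conclusion of Lemma~W, now to be produced uniformly in the Casimir values $c$.

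Then I would perform the global construction by transcribing the scheme of Lemma~\ref{thm:W2}. Using the first integrals together with the flow of $\xi_h$, each orbit has a saturated tubular neighbourhood carrying coordinates $(h_1,\dots,h_{n-2},h,t)$ in which $\xi_h=\partial_t$. On such a tube I set $f_\lambda=b(h_1,\dots,h_{n-2},h)\,l(t)$, with $b$ a transverse bump and $l$ non-decreasing, equal to $0$ for $t\leq-1$, to $1$ for $t\geq1$ and strictly increasing on $(-1,1)$, so that $f_\lambda$ is bounded, its differential has compact support, and $L_{\xi_h}f_\lambda=b\,l'\geq0$ with strict inequality on a core $V_\lambda$. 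Choosing a locally finite family of such tubes whose cores $V_\lambda$ cover $M$, I would set $f=\sum_\lambda a_\lambda f_\lambda$ with $a_\lambda=2^{-\lambda}M_\lambda^{-1}$ selected exactly as in Lemma~\ref{thm:W2} so that the series and all its derivatives converge; then at every point each summand of $L_{\xi_h}f=\sum_\lambda a_\lambda\,b_\lambda\,l'_\lambda$ is nonnegative and at least one is positive, giving $L_{\xi_h}f>0$.

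Finally I would settle the single- versus multi-valuedness. Each leaf of $\cF$ is diffeomorphic to $\bR$ or to $\bS^1$; on a closed orbit no single-valued function can be strictly increasing, so there the closed $1$-form $df$ -- well defined and positive on $\xi_h$ by the construction -- has a strictly positive period, whence $f$ is only defined on a covering and $A=\bS^1$, while on the non-closed leaves $f$ is single-valued and $A=\bR$, the same dichotomy as the compact-component case of Lemma~\ref{thm:W2}. I expect the main obstacle to be exactly this global assembly: producing a locally finite family of saturated tubes whose cores exhaust $M$ and along which the bounded monotone pieces glue to a smooth function with everywhere-positive derivative, while verifying that closed orbits are the sole source of multivaluedness. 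The convergence estimates themselves are routine once the weights are chosen as in Lemma~\ref{thm:W2}, and the two-dimensional identity above guarantees that positivity of $L_{\xi_h}f$ is equivalent to the desired strict positivity of $\{h,f\}_H$.
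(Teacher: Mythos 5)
Your proposal is correct and follows essentially the same route as the paper: local saturated tubes with coordinates given by the first integrals plus a leafwise parameter, local functions of the form (transverse bump)$\times$(monotone function along the leaf), a weighted sum with coefficients chosen as in Lemma~\ref{thm:W2} to force convergence of all derivatives, and multivaluedness arising exactly from closed orbits. Your fibered reduction to the planar Weiner lemma over the Casimir levels is a nice motivating addition, but the construction itself coincides with the paper's.
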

\begin{proof}
 Set $h_{n-1}=h$. Let $\cF$ the 1-dimensional Hamiltonian foliation associated 
 to $h$, namely the one defined by $dh_1=\cdots=dh_{n-1}=0$, and let $\pi:M\to\cF$ be
 the canonical associated projection. At every point $p\in M$ there exists a saturated
 (with respect to $\pi$) neighbourhood $U_p\simeq D\times X$, where $D\simeq\bR^{n-1}$ 
 and $X$ is either $\bR$ or $\bS^1$, defined as the connected component of the set
 $W_p=\{a_i< h_i< b_i, i=1,\dots,n-1\}$ which contains $p$. We renormalize these 
 coordinates by using new ones $\hat h_i = \mu_i(h_i-\nu_i)$ so that $W_p$ 
 is defined by $|\hat h_i| < 2$, $i=1,\dots,n-1$.

 Let now $A_p$ be the subset of $U_p$ defined by $|\hat h_i| < 1$, $i=1,\dots,n-1$
 and take two functions $b$ and $l$ like in the proof of Lemma~\ref{thm:W2}.

 The real-valued function 
 $$
 f_p(h_1,\dots,h_{n-1},\varphi) =  l(\varphi )\times\prod_{i=1,\dots,n-1}b(h_i)
 $$
 is well-defined and smooth in $A_p$ and it can be extended to a smooth function on 
 the whole $M$ by setting it equal to zero outside $A_p$. Clearly
 $$
 df_p=\omega_{n-1}\oplus\Pi_{i=1}^{n-1}b(h^i) l'(\varphi)d\varphi
 $$
 where $\omega_{n-1}\in\span\{dh_1,\cdots,dh_{n-1}\}$.
 Then $\{h,f_p\}_H$ everywhere vanishes except within $B_p=\{p'\in A_p|\,|\varphi(p')|<1\}$, 
 where we have 
 $$
 \vbox{
   \halign{$#$\hfill&$#$\hfill\cr
     \{h,f_p\}_H&=*[dh_1\wedge\cdots\wedge dh_{n-1}\wedge df_p]\cr
     &=*[dh_1\wedge\cdots\wedge dh_{n-1}\wedge \Pi_{i=1}^{n-1}b(h_i)l'(\varphi)d\varphi]\cr
     &=*[dh_1\wedge\cdots\wedge dh_{n-1}\wedge d\varphi]\,\prod_{i=1}^{n-1}b(h_i)l'(\varphi)>0\,,\cr
   }
 }
 $$
 the function $*[dh_1\wedge\cdots\wedge dh_{n-1}\wedge d\varphi]$ 
 being positive for all points $q\in U_p$ and every $p\in M$ since $M$ is oriented.

 Now extract a countable subcovering $\{A_{p_k}\}_{k\in\bN}$ from $\{A_p\}$ and
 let $f_k=f_{p_k}$ be the correponding function on every $A_k:=A_{p_k}$.
 As in Lemma~\ref{thm:W2}, the series $\sum_{k} a_k f_k$ can be made convergent to a 
 smooth function $f$ by choosing a convenient sequence $a_k$. Then $\{h,f\}_H>0$ on the 
 whole $M$ since every point $p$ is covered by at least one $A_k$, so that 
 $\{h,f\}_H\geq\{h,f_k\}_H>0$.
\end{proof}
\noindent
{\bf Proof of Theorem \ref{thm:MagnBra}}
Let $\xi_h$ be the Hamiltonian vector field associated to $h$ through $\{,\}_H$.
Then, by Lemma~\ref{thm:W3}, there exists a function $f$ (possibly multi-valued) 
such that $L_{\xi}f>0$. Hence, as seen in Theorem~\ref{thm:bobtrick}, 
the smooth map $F:M\to\Rt$ given by $F(x)=\psi(f(x))$ is $\cH$-free 
($\cH=\span\{\xi_h\}$), where $\psi:\bR\rightarrow\Rt$ (respectively 
$\psi:\bS^1\rightarrow\Rt$) is free if $f$ is single-valued (respectively 
multi-valued). 
\qed

\vskip 0.3cm

\providecommand{\bysame}{\leavevmode\hbox to3em{\hrulefill}\thinspace}
\providecommand{\MR}{\relax\ifhmode\unskip\space\fi MR }
\providecommand{\MRhref}[2]{%
 \href{http://www.ams.org/mathscinet-getitem?mr=#1}{#2}
}
\providecommand{\href}[2]{#2}

\bibliographystyle{amsalpha}

\end{document}